\newtheorem{theorem}{Theorem}
\newtheorem{proposition}[theorem]{Proposition}
\newtheorem{corollary}[theorem]{Corollary}
\newtheorem{lemma}[theorem]{Lemma}
\newtheorem{assumption}[theorem]{Assumption}
\theoremstyle{definition}
\newtheorem{example}[theorem]{Example}
\newtheorem{remark}[theorem]{Remark}
\newcommand{\E}{\mathbb{E}}
\newcommand{\N}{\mathbb{N}}
\newcommand{\Q}{\mathbb{Q}}
\newcommand{\R}{\mathbb{R}}
\newcommand{\Pb}{\mathbb{P}}
\newcommand{\F}{\mathcal{F}}
\newcommand{\C}{\mathcal{C}}
\newcommand{\G}{\mathcal{G}}
\def\={{\;\mathop{=}\limits^{\text{(law)}}\;}}
\begin{document}

\begin{center}
{\LARGE \textbf{On last passage times of linear diffusions\\
\vspace{.3cm}
 to curved boundaries}}\\
\vspace{.7cm}
{\large Christophe \textsc{Profeta}\footnote{Laboratoire d'Analyse et Probabilit\'es, Universit\'e d'\'Evry - Val d'Essonne, B\^atiment I.B.G.B.I., 3\`eme \'etage, 23 Bd. de France, 91037 EVRY CEDEX.\\
 \textbf{E-mail}: christophe.profeta@univ-evry.fr}
}\\
\vspace{.8cm}
\end{center}

\textbf{Abstract:	} The aim of this paper is to study the law of the last passage time of a linear diffusion to a curved boundary. We start by giving a general expression for the density of such a random variable under some regularity assumptions. Following Robbins \& Siegmund, we then show that this expression may be computed for some implicit boundaries via a martingale method.  Finally, we discuss some links between first hitting times and last passage times via time inversion, and present an integral equation (which we solve in some particular cases) satisfied by the density of the last passage time.  Many examples are given in the Brownian and Bessel frameworks.\\

\textbf{Keywords:} Last passage times; linear diffusions; hitting times; Brownian motion;   Bessel processes.

%\tableofcontents
\section{Introduction}
%\subsection{A few notation}
\subsection{Motivation}
Let $\ell \in [-\infty,+\infty[$ and consider a linear regular  conservative diffusion $(X_t, t\geq0)$  taking values in $I=(\ell,+\infty[$ with $+\infty$ a natural boundary. %, with  $\F_\infty:= \bigvee\limits_{t\geq0}\F_t$.\\
\\
\\
Let $f:[0,+\infty[\longrightarrow [\ell,+\infty[$ be a continuous function and define $\zeta(f):=\inf\{t\geq0\,;\, f(t)=\ell\}\in]0,+\infty]$. The aim of this paper is to compute the law of the last passage time of $X$ to the boundary $f$ before time $\zeta(f)$: 
$$
\begin{array}{rl}
G_f&:=\sup\{0\leq t\leq \zeta(f)\,;\, X_t=f(t)\},\\
\vspace{-.2cm}\\
&\!(= 0 \quad \text{if }\{0\leq t\leq \zeta(f)\,;\, X_t=f(t)\}=\varnothing).
\end{array}
$$
This problem was essentially addressed (in a far more general framework) in the literature through the study of additive functionals. We refer in particular to  Getoor \& Sharpe \cite[Proposition 3.3]{GS} where the law of the last exit time from a Borel set $D$ of a general Markov process is computed, with the help of the potential kernel of an additive functional associated to $D$. In \cite{Por}, the case of $\R^n$-valued symmetric stable L\'evy processes is tackled, and the law of the last exit time from a Borel set $D\subset \R^n$ is obtained in term of the equilibrium measure of $D$ (see also Takeuchi \cite{Tak} for a study of moments).\\

In the set-up of diffusions (which is our concern),  one of the main result is due to Pitman  \& Yor \cite{PY}, who compute the law of $G_f$ when $f(t)=a$ is a constant boundary and $(X_t, t\geq0)$ is a transient diffusion going to $+\infty$ a.s. (Another proof is given in \cite[Chapter 2, p.38]{PRY} using a formula for the last passage time of a continuous local martingale to a constant boundary.) We shall recover their result in Example \ref{ex:PY} below.\\

On the other hand, much emphasis has been put, quite naturally, on the study of the first passage time of $X$ to the boundary  $f$:
$$
\begin{array}{rl}
T_f&:=\inf\{t\geq0\,;\, X_t=f(t)\},\\
\vspace{-.2cm}\\
&\!(= +\infty \quad \text{if }\{t\geq0\,;\, X_t=f(t)\}=\varnothing).
\end{array}
$$
Many ideas have emerged to solve this problem: changes of measure and partial differential equations \cite{Sal,Gro}, integral equations \cite{RSS,BNR,GNRS}, martingale methods \cite{RS,Nov}, inversion of time \cite{PY,AP}...   We shall see that some of these methods may be applied to get information on last passage times.\\

\subsection{Notations}

Let $(X_t, t\geq0)$ be a linear regular diffusion taking values in $I=(\ell,+\infty[$ with $+\infty$ a natural boundary.  We assume that $(X_t, t\geq0)$ is conservative, in the sense that it has an infinite life-time, but we make no assumption on the nature of the boundary point $\ell$.
Let $\Pb_x$ and $\E_x$ denote, respectively, the probability measure and the expectation associated with $X$ when started from $x\geq \ell$. We assume that $X$ is defined on the canonical space $\Omega:=\mathcal{C}(\R^+\rightarrow I)$ and we denote by $(\F_t, t\geq0)$ its natural filtration.\\

We denote by  $m(dx)=\rho(x)dx$ its speed measure, which is assumed to be absolutely continuous with respect to the Lebesgue measure and by $s$ its scale function, which we assume to be of $\C^2$ class. 
With these notations, the infinitesimal generator of $(X_t, t\geq0)$ reads:
$$\G =\frac{\partial^2 }{\rho(x) \partial x \, s^\prime(x) \partial x} \qquad \text{for } (t,x)\in \R^+\times I.$$ 
\noindent
It is known from It\^o \& McKean \cite[p.149]{IMK} that $(X_t, t\geq0)$ admits a transition density $q(t,x,y)$ with respect to $m$, which is 
 jointly continuous and symmetric in $x$ and $y$, that is: $q(t,x,y)=q(t,y,x)$. In the remainder of the paper, we shall always assume that $q$ is a $\C^{1,2,2}$ class function on $]0,+\infty[\times I\times I$.  In particular, for any $y\in]\ell,+\infty[$, the function 
 $(t,x)\longmapsto q(t,x,y)$ is a solution of the partial differential equation:
 \begin{equation}\label{eq:q}
 \G q =\frac{\partial q}{\partial t} \qquad \text{on } ]0,+\infty[\times ]\ell, +\infty[.
 \end{equation}

\subsection{Organization of the paper}

\begin{enumerate}[$\bullet$]
\item We start in Section \ref{sec2} by giving,  under some regularity assumptions, a general expression for the density of the r.v. $G_f$:
\begin{equation}\label{eq:plan}
\Pb_x(G_f\in dt)=\Phi(t) q(t,x,f(t)) dt ,\qquad\quad (0<t<\zeta(f))
\end{equation}
where 
$$\Phi(t)=\frac{1}{s^\prime(f(t))} \frac{\partial }{\partial y}\Pb_y(T_{f(t+\centerdot)}=+\infty)|_{y=f(t)}.$$
Observe that the dependence on the initial state only appears through the transition density. Several examples of application of this formula are given, involving Brownian motion and Bessel processes with linear, squared root and square boundaries.\\

\item In Section \ref{sec3}, we follow Robbins \& Siegmund \cite{RS} and present a martingale method for computing the function $\Phi$ which appears in the previous formula. This gives us the law of $G_f$ for a large class of implicit boundaries $f$.\\

\item We then discuss, in Section \ref{sec4}, some relations between first hitting times and last passage times through time inversion.\\

\item Finally, for all initial states $x$ such that $\Pb_x(G_f>0)=1$ (i.e. the law of $G_f$ under $\Pb_x$ has no atoms), Formula (\ref{eq:plan}) implies that:
$$\int_0^{\zeta(f)}\Phi(t) q(t,x,f(t)) dt=1.$$
We shall see in Section \ref{sec5} that in some cases, this relation characterizes uniquely the function $\Phi$, hence the law of $G_f$.
\end{enumerate}

\section{The density of $G_f$}\label{sec2}

We start by establishing a general formula for the density of the last passage time $G_f$. Let $(\theta_t, t\geq0)$ denote the translation operator defined by:
$$\forall u\geq0,\qquad \left(f\circ \theta_t\right)(u)=f(t+u).$$
In the following, we shall distinguish two cases, depending on whether the process $(X_t, t\geq0)$ remains above the boundary $f$ after the last passage time $G_f$ (we shall say that $f$ is a lower boundary) or under the boundary $f$ (resp. $f$ is an upper boundary).

\subsection{Lower boundaries}

Let $f:[0,+\infty[\longrightarrow [\ell,+\infty[$ be a continuous function, which is of $\C^1$ class on $]0,+\infty[$  and define $\zeta(f):=\inf\{t\geq0\,;\, f(t)=\ell\}\in]0,+\infty]$.
If $\zeta(f)=+\infty$, we suppose (to ensure that $G_f\neq +\infty$ a.s.) that:
\begin{equation}\label{eq:up}
\forall x\in I,\qquad \Pb_x\left(\lim_{t\rightarrow \zeta(f)} X_t-f(t)>0\right)=1.
\end{equation}
This implies in particular that, for any value of $\zeta(f)\in]0,+\infty]$, the diffusion $(X_t, t\geq0)$ remains above the boundary $f$ on the time interval $]G_f, \zeta(f)[$.\\

\vspace*{.5cm}
%%%%%%%%%%%%%%%%%%%%%%%%%%%%%%%%%%%%
\def\JPicScale{0.8}
\ifx\JPicScale\undefined\def\JPicScale{1}\fi
\unitlength \JPicScale mm
\begin{picture}(180.88,48.12)(0,0)
\linethickness{0.3mm}
\put(7.5,4.62){\line(0,1){41}}
\put(7.5,45.62){\vector(0,1){0.12}}
\linethickness{0.3mm}
\put(7.5,4.62){\line(1,0){64}}
\put(71.5,4.62){\vector(1,0){0.12}}
\linethickness{0.3mm}
\qbezier(7.5,27.62)(28.22,39.31)(41.62,23.62)
\qbezier(41.62,23.62)(55.03,7.93)(55.5,4.62)
\linethickness{0.2mm}
\multiput(7.5,15.75)(0.12,0.5){8}{\line(0,1){0.5}}
\multiput(8.5,19.75)(0.12,-0.75){8}{\line(0,-1){0.75}}
\multiput(9.5,13.75)(0.12,1){8}{\line(0,1){1}}
\multiput(10.5,21.75)(0.12,-0.5){8}{\line(0,-1){0.5}}
\multiput(11.5,17.75)(0.12,1.25){8}{\line(0,1){1.25}}
\multiput(12.5,27.75)(0.12,-0.75){8}{\line(0,-1){0.75}}
\multiput(13.5,21.75)(0.12,1.62){8}{\line(0,1){1.62}}
\multiput(14.5,34.75)(0.12,-0.88){8}{\line(0,-1){0.88}}
\multiput(15.5,27.75)(0.12,0.75){8}{\line(0,1){0.75}}
\multiput(16.5,33.75)(0.12,0.75){8}{\line(0,1){0.75}}
\multiput(17.5,39.75)(0.12,-0.75){8}{\line(0,-1){0.75}}
\multiput(18.5,33.75)(0.12,0.5){8}{\line(0,1){0.5}}
\multiput(19.5,37.75)(0.12,-0.88){8}{\line(0,-1){0.88}}
\multiput(20.5,30.75)(0.12,0.62){8}{\line(0,1){0.62}}
\multiput(21.5,35.75)(0.12,-1.75){8}{\line(0,-1){1.75}}
\multiput(22.5,21.75)(0.12,0.25){8}{\line(0,1){0.25}}
\multiput(23.5,23.75)(0.12,-0.62){8}{\line(0,-1){0.62}}
\multiput(24.5,18.75)(0.12,0.75){8}{\line(0,1){0.75}}
\multiput(25.5,24.75)(0.12,-0.38){8}{\line(0,-1){0.38}}
\multiput(26.5,21.75)(0.12,0.62){8}{\line(0,1){0.62}}
\multiput(27.5,26.75)(0.12,-0.88){8}{\line(0,-1){0.88}}
\multiput(28.5,19.75)(0.12,0.5){8}{\line(0,1){0.5}}
\multiput(29.5,23.75)(0.12,-0.75){8}{\line(0,-1){0.75}}
\multiput(30.5,17.75)(0.12,1.25){8}{\line(0,1){1.25}}
\multiput(31.5,27.75)(0.12,-0.5){8}{\line(0,-1){0.5}}
\multiput(32.5,23.75)(0.12,1.12){8}{\line(0,1){1.12}}
\multiput(33.5,32.75)(0.12,-0.12){8}{\line(1,0){0.12}}
\multiput(34.5,31.75)(0.12,0.5){8}{\line(0,1){0.5}}
\multiput(35.5,35.75)(0.12,-0.62){8}{\line(0,-1){0.62}}
\multiput(36.5,30.75)(0.12,0.25){8}{\line(0,1){0.25}}
\multiput(37.5,32.75)(0.12,-0.95){10}{\line(0,-1){0.95}}
\multiput(38.75,23.25)(0.13,0.23){5}{\line(0,1){0.23}}
\multiput(39.38,24.38)(0.12,-0.29){9}{\line(0,-1){0.29}}
\multiput(40.5,21.75)(0.12,0.38){8}{\line(0,1){0.38}}
\multiput(41.5,24.75)(0.12,0.38){8}{\line(0,1){0.38}}
\linethickness{0.2mm}
\multiput(42.5,27.62)(0.12,0.38){8}{\line(0,1){0.38}}
\multiput(43.5,30.62)(0.13,-0.71){7}{\line(0,-1){0.71}}
\multiput(44.38,25.62)(0.12,0.38){5}{\line(0,1){0.38}}
\multiput(45,27.5)(0.12,0.32){13}{\line(0,1){0.32}}
\multiput(46.5,31.62)(0.12,-1.12){8}{\line(0,-1){1.12}}
\multiput(47.5,22.62)(0.12,0.66){10}{\line(0,1){0.66}}
\multiput(48.75,29.25)(0.12,-0.6){6}{\line(0,-1){0.6}}
\multiput(49.5,25.62)(0.12,-0.88){8}{\line(0,-1){0.88}}
\multiput(50.5,18.62)(0.12,0.38){8}{\line(0,1){0.38}}
\multiput(51.5,21.62)(0.12,-0.5){8}{\line(0,-1){0.5}}
\multiput(52.5,17.62)(0.12,0.75){8}{\line(0,1){0.75}}
\multiput(53.5,23.62)(0.12,-1.13){8}{\line(0,-1){1.13}}
\multiput(54.5,14.62)(0.12,0.63){8}{\line(0,1){0.63}}
\multiput(55.5,19.62)(0.12,-0.5){8}{\line(0,-1){0.5}}
\multiput(56.5,15.62)(0.12,0.88){8}{\line(0,1){0.88}}
\multiput(57.5,22.62)(0.12,-0.5){8}{\line(0,-1){0.5}}
\multiput(58.5,18.62)(0.12,1.25){8}{\line(0,1){1.25}}
\multiput(59.5,28.62)(0.12,-0.62){8}{\line(0,-1){0.62}}
\multiput(60.5,23.62)(0.12,1){8}{\line(0,1){1}}
\multiput(61.5,31.62)(0.12,-0.62){8}{\line(0,-1){0.62}}
\multiput(62.5,26.62)(0.12,0.5){8}{\line(0,1){0.5}}
\multiput(63.5,30.62)(0.12,-0.75){8}{\line(0,-1){0.75}}
\multiput(64.5,24.62)(0.12,0.5){8}{\line(0,1){0.5}}
\multiput(65.5,28.62)(0.12,-0.75){8}{\line(0,-1){0.75}}
\put(56.25,1.88){\makebox(0,0)[cc]{$\zeta(f)$}}

\put(3.5,4.62){\makebox(0,0)[cc]{$\ell$}}

\put(27.5,35.62){\makebox(0,0)[cc]{$f$}}

\put(68.5,2.62){\makebox(0,0)[cc]{$t$}}

\linethickness{0.1mm}
\multiput(41.25,4.38)(0,2.04){10}{\line(0,1){1.02}}
\put(41.25,1.25){\makebox(0,0)[cc]{$G_f$}}

\linethickness{0.3mm}
\put(116.88,4.62){\line(0,1){41}}
\put(116.88,45.62){\vector(0,1){0.12}}
\linethickness{0.3mm}
\put(116.88,4.62){\line(1,0){64}}
\put(180.88,4.62){\vector(1,0){0.12}}
\linethickness{0.3mm}
\qbezier(117,40)(125.8,17.7)(148.38,12.75)
\qbezier(148.38,12.75)(170.95,7.8)(173.75,8.75)
\linethickness{0.2mm}
\multiput(116.88,15.75)(0.12,0.5){8}{\line(0,1){0.5}}
\multiput(117.88,19.75)(0.12,-0.75){8}{\line(0,-1){0.75}}
\multiput(118.88,13.75)(0.12,1){8}{\line(0,1){1}}
\multiput(119.88,21.75)(0.12,-0.5){8}{\line(0,-1){0.5}}
\multiput(120.88,17.75)(0.12,1.25){8}{\line(0,1){1.25}}
\multiput(121.88,27.75)(0.12,-0.75){8}{\line(0,-1){0.75}}
\multiput(122.88,21.75)(0.12,1.62){8}{\line(0,1){1.62}}
\multiput(123.88,34.75)(0.12,-0.88){8}{\line(0,-1){0.88}}
\multiput(124.88,27.75)(0.12,0.75){8}{\line(0,1){0.75}}
\multiput(125.88,33.75)(0.12,0.75){8}{\line(0,1){0.75}}
\multiput(126.88,39.75)(0.12,-0.75){8}{\line(0,-1){0.75}}
\multiput(127.88,33.75)(0.12,0.5){8}{\line(0,1){0.5}}
\multiput(128.88,37.75)(0.12,-0.88){8}{\line(0,-1){0.88}}
\multiput(129.88,30.75)(0.12,0.62){8}{\line(0,1){0.62}}
\multiput(130.88,35.75)(0.12,-1.75){8}{\line(0,-1){1.75}}
\multiput(131.88,21.75)(0.12,0.25){8}{\line(0,1){0.25}}
\multiput(132.88,23.75)(0.12,-0.98){7}{\line(0,-1){0.98}}
\multiput(133.75,16.88)(0.12,0.56){10}{\line(0,1){0.56}}
\multiput(135,22.5)(0.12,-0.5){5}{\line(0,-1){0.5}}
\multiput(135.62,20)(0.13,0.68){10}{\line(0,1){0.68}}
\multiput(136.88,26.75)(0.12,-0.88){8}{\line(0,-1){0.88}}
\multiput(137.88,19.75)(0.12,0.5){8}{\line(0,1){0.5}}
\multiput(138.88,23.75)(0.12,-0.75){8}{\line(0,-1){0.75}}
\multiput(139.88,17.75)(0.12,1.25){8}{\line(0,1){1.25}}
\multiput(140.88,27.75)(0.12,-0.5){8}{\line(0,-1){0.5}}
\multiput(141.88,23.75)(0.12,1.12){8}{\line(0,1){1.12}}
\multiput(142.88,32.75)(0.12,-0.12){8}{\line(1,0){0.12}}
\multiput(143.88,31.75)(0.12,0.5){8}{\line(0,1){0.5}}
\multiput(144.88,35.75)(0.12,-0.62){8}{\line(0,-1){0.62}}
\multiput(145.88,30.75)(0.12,0.25){8}{\line(0,1){0.25}}
\multiput(146.88,32.75)(0.12,-0.95){10}{\line(0,-1){0.95}}
\multiput(148.12,23.25)(0.13,0.23){5}{\line(0,1){0.23}}
\multiput(148.75,24.38)(0.13,-0.29){9}{\line(0,-1){0.29}}
\multiput(149.88,21.75)(0.12,0.38){8}{\line(0,1){0.38}}
\multiput(150.88,24.75)(0.12,0.38){8}{\line(0,1){0.38}}
\linethickness{0.2mm}
\multiput(151.88,27.62)(0.12,-1.52){5}{\line(0,-1){1.52}}
\multiput(152.5,20)(0.12,0.56){10}{\line(0,1){0.56}}
\multiput(153.75,25.62)(0.13,-0.75){5}{\line(0,-1){0.75}}
\multiput(154.38,21.88)(0.12,0.62){10}{\line(0,1){0.62}}
\multiput(155.62,28.12)(0.13,-0.81){10}{\line(0,-1){0.81}}
\multiput(156.88,20)(0.12,0.19){10}{\line(0,1){0.19}}
\multiput(158.12,21.88)(0.13,-0.63){5}{\line(0,-1){0.63}}
\multiput(158.75,18.75)(0.12,-0.5){10}{\line(0,-1){0.5}}
\multiput(160,13.75)(0.12,0.63){5}{\line(0,1){0.63}}
\multiput(160.62,16.88)(0.12,-0.55){16}{\line(0,-1){0.55}}
\multiput(162.5,8.12)(0.12,0.25){5}{\line(0,1){0.25}}
\multiput(163.12,9.38)(0.13,-0.5){5}{\line(0,-1){0.5}}
\multiput(163.75,6.88)(0.13,0.37){5}{\line(0,1){0.37}}
\multiput(164.38,8.75)(0.12,-0.5){5}{\line(0,-1){0.5}}
\multiput(165,6.25)(0.12,0.31){10}{\line(0,1){0.31}}
\multiput(166.25,9.38)(0.13,0.62){5}{\line(0,1){0.62}}
\multiput(166.88,12.5)(0.12,1){5}{\line(0,1){1}}
\multiput(167.5,17.5)(0.12,-0.5){5}{\line(0,-1){0.5}}
\multiput(168.12,15)(0.13,1){5}{\line(0,1){1}}
\multiput(168.75,20)(0.13,-0.62){5}{\line(0,-1){0.62}}
\multiput(169.38,16.88)(0.12,0.56){10}{\line(0,1){0.56}}
\multiput(170.62,22.5)(0.13,-0.62){5}{\line(0,-1){0.62}}
\multiput(171.25,19.38)(0.12,0.75){10}{\line(0,1){0.75}}
\multiput(172.5,26.88)(0.12,-0.75){5}{\line(0,-1){0.75}}
\put(148.75,48.12){\makebox(0,0)[cc]{$\zeta(f)=+\infty$}}

\put(112.88,4.62){\makebox(0,0)[cc]{$\ell$}}

\put(120.62,41.88){\makebox(0,0)[cc]{$f$}}

\put(177.88,2.62){\makebox(0,0)[cc]{$t$}}

\linethickness{0.1mm}
\multiput(166.25,4.38)(0,2.25){3}{\line(0,1){1.12}}
\put(166.25,1.88){\makebox(0,0)[cc]{$G_f$}}

\put(28.12,47.5){\makebox(0,0)[cc]{$\zeta(f)<+\infty$}}

\put(150.62,47.5){\makebox(0,0)[cc]{}}

\end{picture}

%%%%%%%%%%%%%%%%%%%%%%%%%%%%%%%
\vspace*{.5cm}

\noindent
Let $H$ be the function defined by:
 $$H:(t,y) \longmapsto \Pb_y(T_{f\circ \theta_t}=+\infty).$$

\noindent
We start with a general formula which gives the density of the r.v. $G_f$.

\begin{theorem}\label{theo:low}\ \\
Assume that $H$ is of $\C^{1,2}$ class on $]0,\zeta(f)[\times]\ell,+\infty[$ and is such that, for every $x\in ]\ell,+\infty[$ and $t>0$:
 \begin{equation}\label{eq:limH}
 \lim_{y\rightarrow+\infty}  \frac{\partial H( t,y)}{s^\prime(y)\partial y} q( t,x,y) =0.
 \end{equation}
Then, the density of the r.v.  $G_f$ under $\Pb_x$ is given by:
\begin{equation}\label{theo:lawGf}
\Pb_x(G_f\in dt)=\frac{q(t,x,f(t))}{s^\prime(f(t))} \; \frac{\partial H(t,y)}{\partial y}|_{y=f(t)}\; dt \quad\qquad(0< t< \zeta(f)).
\end{equation}
Note that for $\displaystyle x>f(0)$, this density is defective.
\end{theorem}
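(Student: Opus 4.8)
The plan is to obtain \eqref{theo:lawGf} from a last-passage (last-exit) decomposition. Fix $0<t<\zeta(f)$. Since $f$ is a lower boundary, on the event $\{G_f\le t\}$ the diffusion does not meet $f$ again after $t$; this forces $X_t\ge f(t)$, and, conditionally on $\F_t$, it is precisely the event that the process started afresh from $X_t$ never hits the shifted boundary $f\circ\theta_t$. I would first make this precise: assumption \eqref{eq:up} (together with the behaviour of $X$ at $\ell$, whatever it is) guarantees that $\Pb_y(T_{f\circ\theta_t}=+\infty)=0$ as soon as $y\le f(t)$, the equality at $y=f(t)$ coming from the assumed continuity of $H$. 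The Markov property at time $t$ then gives the key identity
\[
\Pb_x(G_f\le t)=\E_x\big[H(t,X_t)\big]=\int_{f(t)}^{+\infty}H(t,y)\,q(t,x,y)\,\rho(y)\,dy,\qquad 0<t<\zeta(f),
\]
using that $H(t,\cdot)$ vanishes on $(\ell,f(t)]$. Differentiating this in $t$ will produce the stated density.

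For the differentiation I need the equation solved by $H$. Here the idea is that, for fixed $t$ and $T:=T_{f\circ\theta_t}$, the process $s\longmapsto H(t+s,X_{s\wedge T})$ is a bounded $\Pb_y$-martingale, since it is a version of $\Pb_y(T=+\infty\mid\F_s)$ stopped at $T$. On $[0,T)$ the space--time point $(t+s,X_s)$ remains in the open region $\{(u,z):z>f(u)\}$, on which $H$ is assumed to be $\C^{1,2}$; applying It\^o's formula and equating the finite--variation part to zero yields
\[
\partial_t H(t,y)+\G H(t,y)=0\qquad\text{for } y>f(t),
\]
where $\G$ acts on the spatial variable. Together with $\partial_t q=\G q$ (which is \eqref{eq:q}, read in the $y$-variable via the symmetry of $q$), this is all the input needed.

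Next I would differentiate the key identity in $t$. As $f$ is $\C^1$ and $H(t,f(t))=0$, the contribution of the moving endpoint disappears, and --- differentiating under the integral sign, which the standing regularity assumptions permit --- one gets
\[
\frac{d}{dt}\,\Pb_x(G_f\le t)=\int_{f(t)}^{+\infty}\big[\partial_t H(t,y)\,q(t,x,y)+H(t,y)\,\partial_t q(t,x,y)\big]\rho(y)\,dy.
\]
Substituting the two equations above, the integrand becomes $\big(H\,\G q-q\,\G H\big)\rho$; since $\rho(y)\,\G g(y)=\frac{d}{dy}\!\big(g'(y)/s'(y)\big)$, the classical Lagrange (Wronskian) identity rewrites the integral as the bracket $\big[(H\,\partial_y q-q\,\partial_y H)/s'\big]$ taken between $y=f(t)$ and $y=+\infty$. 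At $y=f(t)$ one term drops out because $H(t,f(t))=0$, and the other contributes exactly $q(t,x,f(t))\,\partial_y H(t,y)\big|_{y=f(t)}/s'(f(t))$ once the signs are accounted for; at $y=+\infty$ the whole bracket vanishes, the term $q\,\partial_y H/s'$ by the hypothesis \eqref{eq:limH} and the term $H\,\partial_y q/s'$ because $0\le H\le 1$ while $q$ and $\partial_y q/s'$ decay at the natural boundary $+\infty$. This gives \eqref{theo:lawGf}. Finally, letting $t\downarrow 0$ in the key identity yields $\Pb_x(G_f=0)=H(0,x)=\Pb_x(T_f=+\infty)$, which is strictly positive when $x>f(0)$; hence the density integrates to $1-\Pb_x(G_f=0)<1$, i.e.\ it is defective in that case.

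The step I expect to be most delicate is the twofold boundary analysis: on one hand, making the last-exit reduction to $\E_x[H(t,X_t)]$ watertight --- in particular the vanishing of $H(t,\cdot)$ on $(\ell,f(t)]$, which is exactly where the lower-boundary hypothesis \eqref{eq:up} and the unspecified nature of $\ell$ come into play; on the other hand, justifying the differentiation under the integral and the vanishing of the boundary term at $+\infty$, for which assumption \eqref{eq:limH} has been tailored. The derivation of the equation for $H$ via It\^o's formula also leans on the assumed $\C^{1,2}$-regularity of $H$, which is postulated rather than proved.
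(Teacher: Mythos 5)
Your proposal is correct and follows essentially the same route as the paper: the Markov-property identity $\Pb_x(G_f\le t)=\int_{f(t)}^{+\infty}H(t,y)q(t,x,y)\,m(dy)$, the backward equation $\G H+\partial_t H=0$ obtained from It\^o's formula applied to the martingale $\Pb_y(T_f=+\infty\mid\F_t)$, and differentiation in $t$ followed by the boundary analysis. Your single application of the Lagrange (Green) identity is just a repackaging of the paper's two successive integrations by parts, producing the same two boundary terms at $+\infty$ (handled by boundedness of $H$ at the natural boundary and by hypothesis \eqref{eq:limH}) and the same surviving term at $y=f(t)$.
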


\noindent

\begin{proof}\ \\
From Assumption (\ref{eq:up}), it is clear that $G_f<\zeta(f)$ a.s. Let $0<t<\zeta(f)$:
\begin{align}
\notag \Pb_x\left(G_f\leq  t\right)&=\Pb_x\left(G_f\leq  t \cap X_ t\geq f( t)\right)\\
\notag &= \int_{f( t)}^{+\infty} \Pb_x\left(G_f\leq  t | X_ t=y\right) q( t,x,y)m(dy)\\
\notag &=\int_{f( t)}^{+\infty} \Pb_y\left(T_{f\circ\theta_ t}=+\infty\right) q( t,x,y)m(dy)\qquad (\text{from the Markov property})\\
\label{eq:intGf} &=\int_{f( t)}^{+\infty} H(t,y) q( t,x,y)m(dy).
\end{align}

\noindent
Observe that, still from the Markov property:
\begin{equation}\label{eq:Mark}
\Pb_y\left(T_f=+\infty|\F_t\right)=\Pb_{X_t}\left(T_{f\circ\theta_t}=+\infty\right)1_{\{T_f>t\}} =H(t\wedge T_f,X_{t\wedge T_f}). 
\end{equation}
Set $\displaystyle (Z_t=s(X_{t\wedge T_\ell}), t<\zeta(f))$. By construction of the scale function $s$, the process $Z$ is a continuous local martingale.  But, for $X_0>f(0)$, $\displaystyle T_f\wedge \zeta(f) \leq T_\ell\wedge \zeta(f)$, hence the process $\displaystyle (X_{t\wedge T_f}=s^{-1}(Z_{t\wedge T_f}), t<\zeta(f))$ is a semimartingale.
Therefore, applying It\^o's formula to (\ref{eq:Mark}), we deduce that the term with finite variation vanishes, i.e. $H$  is a solution to the partial differential equation:
\begin{equation}\label{eq:GH}
\G H+\frac{\partial H}{\partial t}=0 \qquad \text{ on the domain } \{(t,y);\; y> f(t)\}.
\end{equation}

\noindent
Now, let us differentiate (\ref{eq:intGf}) with respect to $t$:
\begin{equation}
\label{eq:dGdt}\Pb_x(G_f\in d t)= \int_{f( t)}^{+\infty} \frac{\partial H( t,y)}{\partial  t} q( t,x,y)m(dy) +  \int_{f( t)}^{+\infty} H( t,y) \frac{\partial q( t,x,y) }{\partial  t}  m(dy)
\end{equation}
since $H( t,f( t))=0$. Integrating by part the second integral, we obtain:
\begin{align*}
\int_{f( t)}^{+\infty} H( t,y) \frac{\partial q( t,x,y) }{\partial  t}  m(dy)&=\int_{f( t)}^{+\infty} H( t,y) \G q( t,x,y)  m(dy)\qquad \text{(from (\ref{eq:q}))}\\
&=\int_{f( t)}^{+\infty} H( t,y)  \frac{\partial^2  q( t,x,y)}{\partial y s^\prime(y) \partial y} dy\\
&=\left[H( t,y)  \frac{\partial  q( t,x,y)}{ s^\prime(y) \partial y}   \right]_{f( t)}^{+\infty} - \int_{f( t)}^{+\infty} \frac{\partial H( t,y)}{\partial y}  \frac{\partial  q( t,x,y)}{ s^\prime(y) \partial y} dy\\
&= - \int_{f( t)}^{+\infty} \frac{\partial H( t,y)}{s^\prime(y)\partial y}  \frac{\partial  q( t,x,y)}{  \partial y} dy\\
&(\text{since $H$ is bounded and $+\infty$ is a natural boundary, see \cite[p.20]{BS}})\\
&=-\left[\frac{\partial H( t,y)}{s^\prime(y)\partial y} q( t,x,y)  \right]_{f( t)}^{+\infty} + \int_{f( t)}^{+\infty} \frac{\partial^2  H( t,y)}{\partial y s^\prime(y) \partial y} q( t,x,y) dy\\
&=\frac{q( t,x,f( t))}{s^\prime(f( t))}\frac{\partial H( t,y)}{\partial y}|_{y=f( t)}   + \int_{f( t)}^{+\infty} \G H( t,y)  q( t,x,y) m(dy). 
\end{align*}
It only remains to plug this relation in (\ref{eq:dGdt}) and to use (\ref{eq:GH}) to get the desired result.\\
\end{proof}

\begin{example}[Transient diffusions and constant boundaries]\label{ex:PY}\ \\
Let $(X_t, t\geq0)$ be a transient diffusion whose scale function is of $\C^2$ class, with the normalization $s(+\infty)=0$.
Let $a>\ell$ and choose $f(t)=a$. Then, for $a<y<b$, we have, by definition of the scale function $s$:
$$\Pb_y\left(T_b<T_a\right) =\frac{s(y)-s(a)}{s(b)-s(a)}  $$
and, letting $b\rightarrow+\infty$, since $+\infty$ is a natural boundary:
 $$\Pb_y\left(T_a=+\infty\right) =1-\frac{s(y)}{s(a)} $$
so we recover the well-known formula:
$$\Pb_x(G_a\in dt)=-\frac{q(t,x,a)}{s(a)}dt, \qquad (0<t<+\infty)$$
see Pitman \& Yor \cite{PY}.  Note that in this case, Equation (\ref{eq:limH}) reduces to  $\displaystyle \lim_{y\rightarrow+\infty} q( t,x,y) =0$, which is of course always satisfied in our set-up.
\end{example}

\begin{remark}\label{rem:low} More generally, it can be proven that, for monotone functions $f$, Equation (\ref{eq:limH}) is automatically satisfied, provided that $H$ is smooth enough.  
\begin{enumerate}[$\bullet$]
\item Assume first that $f$ is increasing. Let $t$ be fixed and set $a=f(t)$. Then, for $y\geq a$, by  the continuity of paths and the strong Markov property:
\begin{align*}
\Pb_y(T_a=+\infty) &= \Pb_y(T_{f\circ\theta_t}=+\infty)+\Pb_y(T_a=+\infty \cap T_{f\circ\theta_t}<+\infty)\\
&= H(t,y) + \int_0^{+\infty}  \Pb_y(T_a=+\infty|T_{f\circ\theta_t}=s)\Pb_y(T_{f\circ\theta_t}\in ds)\\
&=H(t,y)  + \int_0^{+\infty}  \Pb_{f(t+s)}(T_a=+\infty)\Pb_y(T_{f\circ\theta_t}\in ds)\\
&\quad(\text{see Peskir \cite{Pes}})\\
&=H(t,y)  +\E_y[\Psi(T_{f\circ\theta_t})]\quad \text{where }\Psi(s)=\Pb_{f(t+s)}(T_a=+\infty).
\end{align*}
Observe now that, since $\Psi$ is increasing, all three functions are increasing functions of $y$. Therefore, we deduce, with the same normalization as in the previous example ($f$ is increasing hence $X$ must be transient), that:
$$0\leq \frac{\partial H(t,y) }{s^\prime(y)\partial y} \leq \frac{\partial \Pb_y(T_a=+\infty) }{s^\prime(y)\partial y}=-\frac{1}{s(a)},$$
which implies, since $\displaystyle \lim_{y\rightarrow+\infty} q(t,x,y)=0$, that Equation  (\ref{eq:limH}) is satisfied.\\ 
\item Assume now that $f$ is decreasing. In this case, the function $t\longmapsto H(t,y)$ is increasing, hence, from Equation (\ref{eq:GH}):
$$\frac{\partial^2 H(t,y)}{\partial y s^\prime(y)\partial y} = -\rho(y) \frac{\partial H(t,y)}{\partial t},$$
we deduce that the function $\displaystyle y\longmapsto\frac{\partial H(t,y)}{ s^\prime(y)\partial y}$ is a positive and decreasing function. 
Therefore it is bounded at $+\infty$ and Equation  (\ref{eq:limH}) is satisfied.
\end{enumerate}
\end{remark}

We now give a few examples involving Brownian motion and Bessel processes. From Theorem \ref{theo:up}, we only need to compute $\Pb_y(T_{f\circ\theta_t}=+\infty)$ to obtain the density of the last passage time to the boundary $f$.

\begin{example}[Bessel processes and straight lines: $f(t)=a-bt$]\ \\
Let $(R_t, t\geq0)$ be a Bessel process of index $\nu>-1$. $(R_t, t\geq0)$ is a diffusion whose generator reads:
$$\G^{(\nu)} = \frac{1}{2} \frac{\partial^2 }{\partial x^2} + \frac{2\nu+1}{2x} \frac{\partial}{\partial x}.$$
Its speed measure $m^{(\nu)}$ and scale function $s^{(\nu)}$ are given by:
$$
\begin{cases}
m^{(\nu)}(dx)=2x^{2\nu+1} dx,\\
(s^{(\nu)})^\prime(x)=x^{-2\nu-1}.
\end{cases}
$$
We denote by $\Pb_x^{(\nu)}$ the law of $(R_t, t\geq0)$ when started at $x$ and by $q^{(\nu)}$ its transition density function  with respect to $m^{(\nu)}$:
$$q^{(\nu)}(t,x,y)=\frac{1}{2t} (xy)^{-\nu} \exp\left(-\frac{x^2+y^2}{2t}\right) I_\nu\left(\frac{xy}{t}\right) $$
with $I_\nu$ the modified Bessel function of the third kind.\\
Choose $f(t)=a-bt$ with $a,b>0$, and thus $\displaystyle \zeta(f)=\frac{b}{a}$. Letting $\lambda$ tend toward 0 in Theorem 5.1 of  Alili \& Patie \cite{AP}, we deduce that, for $y\geq a$:
$$\Pb_y^{(\nu)}(T_f<+\infty)=\exp\left(\frac{b}{2a}(a^2-y^2)\right) \frac{y^{-\nu}}{a^{-\nu}}\int_0^{+\infty} \frac{K_\nu(\sqrt{2}yz)}{K_\nu(\sqrt{2}az)} q^{(\nu)}\left(\frac{b}{2a},\frac{b}{\sqrt{2}},z\right)m^{(\nu)}(dz)$$
where $K_\nu$ denotes the McDonald function of index $\nu$.
Then, since:
$$\Pb_y^{(\nu)}(T_{f\circ \theta_t}<+\infty)=\Pb_y^{(\nu)}(T_{a-bt-b\centerdot}<+\infty)$$
we obtain :
$$\frac{\partial H(t,y)}{\partial y}|_{y=a-bt}=b+\int_0^{+\infty}\!\!  \sqrt{2}z \frac{K_{\nu+1}}{K_\nu}\left(\sqrt{2}(a-bt)z \right)   q^{(\nu)}\left(\frac{b}{2(a-bt)},\frac{b}{\sqrt{2}},z\right)m^{(\nu)}(dz)$$
and finally, for $\displaystyle 0<  t<\frac{a}{b}$:
$$\Pb_x^{(\nu)}(G_{a-b\centerdot}\in dt)= \frac{1}{2t}(a-bt)^{\nu+1} x^{-\nu} \exp\left(-\frac{x^2+(a-bt)^2}{2t}\right) I_\nu\left(\frac{x(a-bt)}{t}  \right)\frac{\partial H(t,y)}{\partial y}|_{y=a-bt} \,dt.$$
\end{example}

\begin{example}[Reflected Brownian motion and straight lines:  $f(t)=a-bt$]\ \\
In particular, when $\nu=-\frac{1}{2}$ (i.e. when  $(X_t, t\geq0)$ is a Brownian motion reflected at 0), then $ K_{-\frac{1}{2}}=K_{\frac{1}{2}}$ and  the above formula simplifies to: 
\begin{multline*}
\Pb_x^{(-\frac{1}{2})}(G_{a-b\centerdot}\in dt)\\
=\left(b+\sqrt{\frac{b}{2\pi(a-bt)}}e^{-b(a-bt)} + b \text{Erf}\left(\sqrt{\frac{b(a-bt)}{2}} \right) \right) \left( e^{-\frac{(x+a-bt)^2}{2t}}+e^{-\frac{(x-a+bt)^2}{2t}}   \right)\frac{dt}{2\sqrt{2\pi t}}
\end{multline*}
where Erf denotes the error function: $\displaystyle \text{Erf}(z)=\frac{2}{\sqrt{\pi}}\int_0^ze^{-x^2}dx$. 
\end{example}

\begin{example}[Bessel processes and squared root boundaries: $\displaystyle f(t)=a\sqrt{1+2\gamma t}$ ]\label{exa:ROU}\ \\
Let $(R_t, t\geq0)$ be a Bessel process of index $\nu>-1$ and choose $\displaystyle f(t)=a\sqrt{1+2\gamma t}$ with $a>0$, $\gamma<0$, and thus $\displaystyle \zeta(f)=-\frac{1}{2\gamma}.$
Define 
$$\tau(t)=\frac{e^{2\gamma t}-1}{2\gamma}.$$
Then, it is known that the process
$\displaystyle  \left(e^{-\gamma t}R_{\tau(t)}, t\geq0\right)$
has the same law as a radial Ornstein-Uhlenbeck process with parameters $\nu$ and $\gamma$, i.e. 
$$(X_t=e^{-\gamma t}R_{\tau(t)}, t\geq0)$$ is a linear diffusion with characteristics:
$$
\begin{cases}
\displaystyle m^{(\nu,\gamma)}(dx) = 2 x^{2\nu+1} e^{-\gamma x^2}dx,\\
\displaystyle s^{(\nu,\gamma)}(x)=-\int_x^{+\infty} y^{-2\nu-1} e^{\gamma y^2}dy.
\end{cases}
$$
Now, we obtain by time change, since $\tau$ is increasing:
\begin{align*}
 \inf\left\{u\geq0,\; R_u=a\sqrt{1+2\gamma u}\right\} &= \tau\left(\inf\left\{t\geq0,\; R_{\tau(t)}=ae^{\gamma t}\right\}\right)\\
 &=\tau\left(\inf\left\{t\geq0,\; X_t=a\right\}\right)
\end{align*}
and we deduce from Example \ref{ex:PY} that:
$$\Pb_y^{(\nu)}(T_f=+\infty)=1-\frac{s^{(\nu,\gamma)}(y)}{s^{(\nu,\gamma)}(a)}.$$
Then, since 
$$\left(f\circ\theta_t\right)(u)=a\sqrt{1+2\gamma (t+u)}=a\sqrt{1+2\gamma t} \;\sqrt{1+\frac{2\gamma}{1+2\gamma t}u},$$
we obtain:
$$\Pb_y^{(\nu)}(T_{f\circ\theta_t}=+\infty)=1-\frac{s^{(\nu, \frac{2\gamma}{1+2\gamma t})}(y)}{s^{(\nu, \frac{2\gamma}{1+2\gamma t})}\left(a\sqrt{1+2\gamma t}\right)},$$
and finally, for $\displaystyle 0<t<-\frac{1}{2\gamma}$:
$$\Pb_x^{(\nu)}\left(G_f\in dt  \right) =-\frac{\displaystyle e^{2 \gamma a^2}}{\displaystyle  s^{(\nu, \frac{2\gamma}{1+2\gamma t})}\left(a\sqrt{1+2\gamma t}\right) } q^{(\nu)}\left(t,x,a\sqrt{1+2\gamma t}\right)dt.$$ 
\end{example}

\subsection{Upper boundaries}
We now study the case when the process $(X_t, t\geq0)$ remains under the boundary $f$ after the last passage time $G_f$.
Let $\ell\in[-\infty,+\infty[$. We make the following assumption on the nature of the boundary point $\ell$ (see \cite[p.19-20]{BS}):
\begin{enumerate}[$\bullet$]
\item If $\ell=-\infty$, we assume that $\ell$ is natural.
\item If $\ell>-\infty$, we assume that $\ell$ is entrance-not-exit.
\end{enumerate}
These hypotheses ensure that for every $t>0$ and $x>\ell$:
$$\lim_{y\rightarrow \ell} \frac{\partial q(t,x,y)}{s^\prime(y)\partial y} = 0.$$

\noindent
Let $f:[0,+\infty[\longrightarrow ]\ell,+\infty[$ be a continuous function, which is of $\C^1$ class on $]0,+\infty[$, and such that $\zeta(f):=\inf\{t\geq0\,;\, f(t)=\ell\}=+\infty$.
We assume that :
\begin{equation*}
\forall x\in I,\qquad \Pb_x\left(\lim_{t\rightarrow +\infty} X_t-f(t)<0\right)=1.
\end{equation*}
This implies that after $G_f$, the diffusion $(X_t, t\geq0)$ remains under the boundary $f$.\\
\noindent
Let $H$ be the function defined by:
 $$H:(t,y) \longmapsto \Pb_y(T_{f\circ \theta_t}=+\infty).$$

\begin{theorem}\label{theo:up}\ \\
Assume that the function $H$ is of $\C^{1,2}$ class on $]0,+\infty[\times]\ell,+\infty[$ and is such that:
 \begin{equation}\label{eq:limH2}
 \lim_{y\rightarrow \ell}  \frac{\partial H( t,y)}{s^\prime(y)\partial y} q(t,x,y) =0.
 \end{equation}
Then, the density of the r.v.  $G_f$ under $\Pb_x$ is given by:
\begin{equation}\label{theo:lawGf2}
\Pb_x(G_f\in dt)=-\frac{q(t,x,f(t))}{s^\prime(f(t))} \; \frac{\partial H(t,y)}{\partial y}|_{y=f(t)}\; dt \quad\qquad(t>0).
\end{equation}
\end{theorem}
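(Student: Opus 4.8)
The plan is to transcribe, step for step, the proof of Theorem~\ref{theo:low}, replacing the region $\{y>f(t)\}$ by $\{\ell<y<f(t)\}$ and replacing the use of the naturality of $+\infty$ by the prescribed behaviour at $\ell$. The only substantive change is a change of sign, arising because $f(t)$ is now the \emph{upper}, rather than the lower, endpoint of the interval over which we integrate.

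First I would reduce to an integral identity. Since the one-dimensional laws of $X$ are absolutely continuous, $\Pb_x(X_t=f(t))=0$; and since the standing assumption $\Pb_x\big(\lim_{t\to+\infty}(X_t-f(t))<0\big)=1$ together with path continuity forces $G_f>t$ on the event $\{X_t>f(t)\}$, we have $\{G_f\le t\}\subset\{X_t\le f(t)\}$ up to a $\Pb_x$-null set. Conditioning on $X_t=y$ and invoking the Markov property exactly as in~(\ref{eq:intGf}), and using that $H(t,f(t))=0$ (starting on the boundary gives $T_{f\circ\theta_t}=0$), I obtain, for every $t>0$,
$$\Pb_x(G_f\le t)=\int_\ell^{f(t)} H(t,y)\,q(t,x,y)\,m(dy).$$
Next I would check that $H$ solves $\G H+\partial_t H=0$ on $\{(t,y);\; \ell<y<f(t)\}$. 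Since $\ell$ is inaccessible from the interior (natural if $\ell=-\infty$, entrance-not-exit if $\ell>-\infty$), the process $(s(X_u),\,u\ge0)$ is a continuous local martingale and $X$ is a semimartingale on the interior; applying It\^o's formula to the bounded martingale $t\mapsto H(t\wedge T_f,X_{t\wedge T_f})$ (cf.~(\ref{eq:Mark})) and cancelling the finite-variation part gives the equation, exactly as for~(\ref{eq:GH}).

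It then remains to differentiate the integral identity in $t$ and to integrate by parts twice over $(\ell,f(t))$, following verbatim the computation in the proof of Theorem~\ref{theo:low}. The Leibniz term produced by the moving endpoint $y=f(t)$ vanishes since $H(t,f(t))=0$. In the first integration by parts, the boundary contribution at $y=\ell$ vanishes because $H$ is bounded and $\lim_{y\to\ell}\frac{\partial q(t,x,y)}{s'(y)\partial y}=0$ (this is the consequence of the hypotheses on $\ell$ recalled just above the theorem), while that at $y=f(t)$ vanishes since $H(t,f(t))=0$; in the second integration by parts, the boundary contribution at $y=\ell$ vanishes by~(\ref{eq:limH2}). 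What survives is the boundary term at $y=f(t)$ from the second integration by parts, which equals $-1$ times the coefficient $\frac{q(t,x,f(t))}{s'(f(t))}\frac{\partial H(t,y)}{\partial y}|_{y=f(t)}$ appearing in~(\ref{theo:lawGf2}) --- the minus sign arising precisely because $f(t)$ is here the upper limit of integration --- together with a bulk integral $\int_\ell^{f(t)}\G H\,q\,m(dy)$. Substituting back and using $\G H+\partial_t H=0$ to cancel the two bulk integrals yields~(\ref{theo:lawGf2}).

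The calculations are routine copies of the previous proof; the only genuine points of care are (i) the inclusion $\{G_f\le t\}\subset\{X_t\le f(t)\}$, which fixes the domain of integration as $(\ell,f(t))$ and hence, ultimately, the sign of the density, and (ii) the vanishing of the boundary term at $\ell$ in the first integration by parts, for which one must invoke the natural/entrance-not-exit character of $\ell$ in place of the naturality of $+\infty$ used in Theorem~\ref{theo:low}. Point (ii) is the only place where the argument differs in substance from the lower-boundary case and is the natural candidate for the main obstacle; everything else is bookkeeping of boundary terms.
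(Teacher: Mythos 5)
Your proposal is correct and follows essentially the same route as the paper: the paper likewise writes $\Pb_x(G_f\le t)=\int_\ell^{f(t)}H(t,y)\,q(t,x,y)\,m(dy)$, notes that $H$ solves $\G H+\partial_t H=0$ on $\{y<f(t)\}$ (with the semimartingale property justified via $T_\ell=+\infty$ a.s.), and repeats the two integrations by parts, killing the bracket terms at $\ell$ exactly by the two facts you cite — $H$ bounded with $\frac{\partial q(t,x,y)}{s'(y)\partial y}\to 0$ as $y\to\ell$, and hypothesis~(\ref{eq:limH2}) — so that only the term at $y=f(t)$ survives with the announced minus sign. No gaps.
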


\begin{proof}\ \\
The proof is of course very similar to the previous one. For $0<t<+\infty$, we have:
$$\Pb_x\left(G_f\leq  t\right)=\int_{\ell}^{f(t)}H(t,y) q( t,x,y)m(dy)$$
with the function $H$ solution of
\begin{equation}\label{eq:GH2}
\G H+\frac{\partial H}{\partial t}=0 \qquad \text{ on the domain } \{(t,y);\; y< f(t)\}.
\end{equation}
(Note that, by hypothesis, $T_\ell=+\infty$ a.s. hence, since $s$ is a function of $\C^2$ class, $X$ is a semimartingale).
Then, we make the same integrations by parts, noticing that the terms between brackets cancel since:
\begin{enumerate}[$i)$]
\item $H$ is bounded and $\displaystyle \frac{\partial q( t,x,y)}{s^\prime(y)\partial y} \xrightarrow[y\rightarrow \ell]{} 0$,
\item  $\displaystyle  \lim_{y\rightarrow \ell}  \frac{\partial H( t,y)}{s^\prime(y)\partial y} q( t,x,y) =0$ by hypothesis.
\end{enumerate}
\end{proof}

\begin{example}[Bessel processes and straight lines: $f(t)=a+bt$]\label{ex:Ba+b}\ \\
Let $(R_t, t\geq0)$ be a Bessel process of index $\nu>0$ and choose $f(t)=a+bt$ with $a,b>0$. Then, from Theorem 5 of Alili \& Patie \cite{AP}, for $y< a$:
$$\Pb_y^{(\nu)}(T_{a+b\centerdot}\in du)=\exp\left(\frac{b}{2a}(y^2-a^2) -\frac{b^2}{2}u\right) \left(1+\frac{b}{a}u\right)^{\nu-1} \sum_{k= 1}^{+\infty}  \frac{y^{-\nu} j_{\nu,k}J_\nu(j_{\nu,k} \frac{y}{a})}{a^{2-\nu} J_{\nu+1}(j_{\nu,k})}\exp\left(-\frac{u\, j^2_{\nu,k} }{2a(a+bu)}\right)du,$$
where $J_\nu$ denotes the Bessel function of the first kind and $(j_{\nu,k})_{k\geq0}$ the ordered sequence of its positive zeroes.
 Then, 
 $$H(t,y)= 1- \Pb^{(\nu)}_y(T_{a+bt+b\centerdot}<+\infty)$$
and, for $ x\geq0$:
\begin{align*}
\Pb_x^{(\nu)}(G_{a+b\centerdot}\in dt)&= -\frac{1}{2t}(a+bt)^{\nu+1} x^{-\nu} \exp\left(-\frac{x^2+(a+bt)^2}{2t}\right) I_\nu\left(\frac{x(a+bt)}{t}  \right)\frac{\partial H(t,y)}{\partial y}|_{y=a+bt} \,dt,\\
&= -\frac{1}{2t}(a+bt)^{\nu+1} x^{-\nu} \exp\left(-\frac{x^2+(a+bt)^2}{2t}\right) I_\nu\left(\frac{x(a+bt)}{t}  \right)\left(-b+\psi^\prime(a+bt)\right) dt
\end{align*}
with
$$\psi(y)= \int_0^{+\infty}  \exp\left(-\frac{b^2}{2}u\right) \left(1+\frac{b}{a+bt}u\right)^{\nu-1} \sum_{k= 1}^{+\infty}  \frac{y^{-\nu} j_{\nu,k}J_\nu(j_{\nu,k} \frac{y}{a+bt})}{(a+bt)^{2-\nu} J_{\nu+1}(j_{\nu,k})}\exp\left(- \frac{u \,j^2_{\nu,k} }{2(a+bt)(a+b(t+u))}\right)du.$$

\end{example}

\begin{example}[Brownian motion and square boundary: $f(t)=a+bt^2$]\label{ex:Bt2}\ \\
Let $(B_t, t\geq0)$ be a Brownian motion and define $f(t)=a+bt^2$ with $a,b>0$. From Salminen \cite{Sal}, we know that:
$$\forall y<a,\qquad \Pb_y(T_f\in du)=2 (bc)^2\exp\left(-\frac{2}{3}b^2 u^3\right) \sum_{k=0}^{+\infty} \exp\left(\frac{\lambda_k}{c}u \right)\frac{\text{Ai}(\lambda_k+2bc(a-y))}{\text{Ai}^\prime(\lambda_k)}du
$$
where Ai denotes the Airy function,  $(\lambda_k)_{k\geq0}$ its zeroes on the negative half-line and $\displaystyle c=(2b^2)^{-\frac{1}{3}}$.
Now fix $t\geq0$. Applying the Cameron-Martin formula, we deduce that:
$$\Pb_{y-bt^2}^{(-2bt)}(T_f\in du)=\exp\left(-2bt(a-y+bt^2)-(2bt) b u^2 -\frac{(2bt)^2}{2}u \right)\Pb_{y-bt^2}(T_f\in du),$$
where $\Pb_x^{(\mu)}$ denotes the law of a Brownian motion with drift $\mu$ started at $x$. But, since
$$\inf\{u\geq0;\;  (y-bt^2)+B_u-2btu = a+bu^2\}=\inf\{u\geq0;\;  y+B_u= a+b(u+t)^2\},$$
we obtain that $\Pb_y(T_{f\circ\theta_t}< +\infty)=\Pb_{y-bt^2}^{(-2bt)}(T_f < +\infty)$ and
\begin{multline*}
H(t,y)=1-2 (bc)^2\exp\left(-\frac{4}{3}b^2t^3-2bt(a-y) \right)\\
\times\int_0^{+\infty}   \exp\left(-\frac{2}{3}b^2(u+t)^3  \right)  \sum_{k=0}^{+\infty} \exp\left(\frac{\lambda_k}{c}u \right)\frac{\text{Ai}(\lambda_k+2bc(a-y+bt^2))}{\text{Ai}^\prime(\lambda_k)}du.
\end{multline*}
Finally, for $x\in \R$,
\begin{align*}
\Pb_x(G_{f}\in dt)&= - \frac{1}{2\sqrt{2\pi t}}\exp\left(-\frac{(x-a-bt^2)^2}{2t}\right) \frac{\partial H(t,y)}{\partial y}|_{y=a+bt^2}\,dt,\\
&=- \frac{1}{2\sqrt{2\pi t}}\exp\left(-\frac{(x-a-bt^2)^2}{2t}\right)\left(-2bt+ \psi^\prime(a+bt^2)\right)
\end{align*}
with
$$\psi(y)= -2 (bc)^2\exp\left(\frac{2}{3}b^2t^3\right) \int_0^{+\infty}   \exp\left(-\frac{2}{3}b^2(u+t)^3  \right)  \sum_{k=0}^{+\infty} \exp\left(\frac{\lambda_k}{c}u \right)\frac{\text{Ai}(\lambda_k+2bc(a-y+bt^2))}{\text{Ai}^\prime(\lambda_k)}du.
$$
\end{example}

\section{Martingales methods}\label{sec3}
We now present a method to obtain explicit expressions for the function $H$ (associated with an a priori implicit boundary).
Let $\ell\in[-\infty,+\infty[$  and  $f:[0,+\infty[\longmapsto ]\ell,+\infty[$ be a continuous function.
In this section, we shall restrict our attention to lower boundaries, and make the following Assumption:

\begin{assumption}\label{Ass}
If $\zeta(f)=+\infty$, we assume that:
$$ \lim_{t\rightarrow +\infty}X_t=+\infty  \;\; a.s \qquad \text{ and }\quad \Pb_x\left(\lim_{t\rightarrow +\infty} X_t-f(t)>0\right)=1.$$
\end{assumption}

\noindent
Consider the domains:
$$\mathcal{D}=\{(t,y)\in]0,\zeta(f)[\times]\ell,+\infty[;\; y\geq f(t)\} \quad \text{ and } \quad \partial \mathcal{D}:=\{(t,y)\in]0,\zeta(f)[\times]\ell,+\infty[; \; y=f(t)\}.$$

\begin{lemma}\label{lem:mart}
Assume that Assumption \ref{Ass} holds and that there exists a function $\overline{H}:\mathcal{D}\longmapsto [0,1]$ of $\C^{1,2}$ class on $\mathcal{D}$ which is solution of the following problem:
\begin{equation}\label{eq:PDE}
\begin{cases}
\displaystyle \G \overline{H} +\frac{\partial \overline{H}}{\partial t} =0,\qquad \text{with the boundary condition } \;
\displaystyle \overline{H}(t,y)=1 \text{ on }\partial \mathcal{D},\\
\vspace{-.3cm}\\
\displaystyle \overline{H} \text{ is decreasing in $y$}\\
\vspace{-.3cm}\\
 \begin{array}{c|c}
 \displaystyle \text{If } \zeta(f)=+\infty,\quad&  \displaystyle \text{If } \zeta(f)<+\infty,\\
\displaystyle \lim_{t,y \rightarrow +\infty} \overline{H}(t,y)=0\quad & \displaystyle \forall y\in]\ell,+\infty[, \; \lim_{t\rightarrow \zeta(f)} \overline{H}(t,y)=0.  
 \end{array}
\end{cases}
\end{equation}
Then, $$\overline{H}(t,y)=\Pb_y(T_{f\circ\theta_t}<+\infty).$$
\end{lemma}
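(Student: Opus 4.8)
The plan is the optional-stopping argument underlying Robbins--Siegmund's method. Fix $(t,y)\in\mathcal{D}$; if $y=f(t)$ both sides equal $1$, so I may assume $y>f(t)$, work under $\Pb_y$, and set $\tau:=T_{f\circ\theta_t}$ and $L:=\zeta(f)-t\in\,]0,+\infty]$. As in the proof of Theorem~\ref{theo:low}, since $y>f(t)=(f\circ\theta_t)(0)$ and $f(t+u)>\ell$ for $u<L$, one has $u\wedge\tau\leq u\wedge T_\ell$ for every $u<L$; hence the stopped process $(X_{u\wedge\tau})_{u<L}$ is a semimartingale, being the image under the $\C^2$ map $s^{-1}$ of the continuous local martingale $(s(X_{u\wedge\tau\wedge T_\ell}))_{u<L}$.

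Next I would apply It\^o's formula to $M_u:=\overline{H}(t+u,X_{u\wedge\tau})$ on $[0,L)$, which is licit since $\overline{H}$ is $\C^{1,2}$ on $\mathcal{D}$ and the stopped path stays in $\mathcal{D}$. The bounded-variation term is $(\G\overline{H}+\partial_t\overline{H})(t+u,X_u)\,\mathbf{1}_{\{u<\tau\}}\,du$, which vanishes by the first line of~(\ref{eq:PDE}); hence $M$ is a continuous local martingale on $[0,L)$. As $0\leq\overline{H}\leq1$, $M$ is bounded, therefore a genuine martingale, and so it converges $\Pb_y$-a.s. and in $L^1$ as $u\uparrow L$ to a limit $M_{L^-}$ with $\overline{H}(t,y)=M_0=\E_y[M_{L^-}]$.

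It then remains to identify $M_{L^-}=\mathbf{1}_{\{\tau<+\infty\}}$, $\Pb_y$-a.s., and this is the one step I expect to be delicate. On $\{\tau<L\}$ the boundary condition $\overline{H}\equiv1$ on $\partial\mathcal{D}$ forces $M_u=1$ for all $u\in[\tau,L)$, so $M_{L^-}=1$ there. On $\{\tau\geq L\}$ I claim $M_{L^-}=0$: if $\zeta(f)=+\infty$ (so $L=+\infty$ and $\{\tau\geq L\}=\{\tau=+\infty\}$), Assumption~\ref{Ass} gives $X_u\to+\infty$, whence $M_u=\overline{H}(t+u,X_u)\to0$ by $\lim_{t,y\to+\infty}\overline{H}(t,y)=0$; if $\zeta(f)<+\infty$, then $u\wedge\tau\leq u\wedge T_\ell$ forces $T_\ell\geq L$, so (outside a $\Pb_y$-null set of paths reaching $\ell$) $X_{L^-}>\ell$, and choosing $y_0\in\,]\ell,X_{L^-}[$ the monotonicity of $\overline{H}$ in its second variable gives $0\leq M_u\leq\overline{H}(t+u,y_0)$ for $u$ near $L$, while $\overline{H}(t+u,y_0)\to0$ by $\lim_{s\uparrow\zeta(f)}\overline{H}(s,y_0)=0$. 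Since, up to a $\Pb_y$-null set, $\{\tau<L\}=\{\tau<+\infty\}$ (automatic when $\zeta(f)=+\infty$, and when $\zeta(f)<+\infty$ because, apart from the negligible event that $X$ reaches $\ell$, a path staying strictly above $f\circ\theta_t$ on all of $[0,L)$ never meets it), we obtain $M_{L^-}=\mathbf{1}_{\{\tau<+\infty\}}$, and taking $\Pb_y$-expectations yields $\overline{H}(t,y)=\Pb_y(T_{f\circ\theta_t}<+\infty)$. Thus the martingale construction is routine (It\^o plus the PDE plus boundedness), and the real work is the endpoint analysis of $M$ on the ``no-hit'' event, where the prescribed monotonicity in $y$ and the decay of $\overline{H}$ at $+\infty$, respectively at $\zeta(f)$, are precisely what makes the limit vanish.
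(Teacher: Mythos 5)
Your argument is essentially the paper's: turn the PDE plus boundary data into the statement that $\overline{H}$ evaluated along the space--time diffusion stopped at $T_{f\circ\theta_t}$ is a bounded local martingale, apply optional stopping, and then analyse the terminal value by splitting on whether the boundary is hit, using $X_u\to+\infty$ when $\zeta(f)=+\infty$ and the decay of $\overline{H}(\cdot,y)$ at $\zeta(f)$ when $\zeta(f)<+\infty$. The one genuine difference in execution is that you invoke boundedness to upgrade the local martingale to a uniformly integrable martingale and pass directly to its a.s.\ and $L^1$ limit $M_{L^-}$, whereas the paper localizes with $T_a$, lets $a\to+\infty$ by dominated convergence, and then lets $s\uparrow \zeta(f)-t$; both routes are valid, and yours is marginally cleaner. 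Your treatment of the case $\zeta(f)<+\infty$ (using the monotonicity of $\overline{H}$ in $y$ to apply the pointwise-in-$y$ limit condition along the random path) actually fills in a detail the paper leaves implicit.

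There is, however, one slip you should repair: you define $M_u:=\overline{H}(t+u,X_{u\wedge\tau})$, stopping only the space variable. With that definition the finite-variation part of It\^o's formula is not $(\G\overline{H}+\partial_t\overline{H})\mathbf{1}_{\{u<\tau\}}\,du$: on $\{u>\tau\}$ there remains the drift $\partial_t\overline{H}(t+u,X_\tau)\,du$, which does not vanish, and moreover $M_u=\overline{H}(t+u,f(t+\tau))$ is \emph{not} equal to $1$ for $u>\tau$, since the point $(t+u,f(t+\tau))$ lies on $\partial\mathcal{D}$ only when $f(t+u)=f(t+\tau)$. The boundary condition pins $\overline{H}$ to $1$ only on the graph of $f$, not on horizontal rays issued from it. The fix is simply to stop the time variable as well, i.e.\ to work with $M_u:=\overline{H}\bigl(t+u\wedge\tau,\,X_{u\wedge\tau}\bigr)$ (this is what the paper does when it writes $\overline{H}(t+s\wedge T_a\wedge T_{f\circ\theta_t}, X_{s\wedge T_a\wedge T_{f\circ\theta_t}})$); then the process is genuinely frozen at the value $\overline{H}(t+\tau,f(t+\tau))=1$ after $\tau$, the drift vanishes identically, and the rest of your argument goes through unchanged.
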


\begin{proof}\ \\
Let $t<\zeta(f)$ be fixed. Applying It\^o's formula, we deduce that the process $\left(\overline{H}(t+s,X_s), s< (\zeta(f)-t)\wedge T_{f\circ\theta_t}\right)$ is a positive and continuous local martingale. Let $s<\zeta(f)-t$ be fixed. From Doob' stopping theorem, with $f(t)\leq y\leq a$:
\begin{align*}
\overline{H}(t,y)&=\E_y\left[\overline{H}(t+s\wedge T_a \wedge T_{f\circ \theta_t}, X_{s\wedge T_a \wedge T_{f\circ \theta_t}})\right]\\
&=\E_y\left[\overline{H}(t+s,X_s)1_{\{s< T_a\wedge T_{f\circ \theta_t}\}}\right]+
\E_y\left[\overline{H}(t+T_a,a) 1_{\{T_a< s\wedge T_{f\circ \theta_t}\}} \right]+\Pb_y\left( T_{f\circ \theta_t} < s\wedge T_a\right).
\end{align*}
We first let $a\rightarrow+\infty$ to obtain, applying the dominated convergence theorem:
$$\overline{H}(t,y)=\E_y\left[\overline{H}(t+s,X_s)1_{\{s<  T_{f\circ \theta_t}\}}\right]+\Pb_y\left( T_{f\circ \theta_t} < s\right).$$
Then, we must distinguish between two cases.

\begin{enumerate}
\item If $\zeta(f)=+\infty$, then, we assumed that $\displaystyle \lim_{s\rightarrow +\infty}X_s=+\infty$ a.s., so:
$$\overline{H}(t+s,X_s) \xrightarrow[s\rightarrow+\infty]{} 0\quad \text{a.s.}$$
and the result follows from the dominated convergence theorem.
\item If $\zeta(f)<+\infty$, then, since $\Pb_y(X_{\zeta(f)-t}=\ell)=0$: 
$$ \lim_{s\rightarrow \zeta(f)-t} \overline{H}(t+s, X_s) =0 \quad \text{a.s.}$$
and the result follows once again from the dominated convergence theorem.
\end{enumerate}
\end{proof}

We shall now give some examples of functions $\overline{H}$ which are solutions of this problem.

\subsection{Martingales constructed on the resolvant}\label{sub31}
We follow the idea of Robbins \& Siegmund \cite{RS}.  Let us define, for $\lambda>0$, the resolvent kernel of $(X_t,t\geq0)$ by  (see \cite[p.19]{BS}):
 \begin{equation*}
u_\lambda(x,y)=\int_0^\infty e^{-\lambda t}q(t,x,y) dt. 
\end{equation*}
\noindent
Let $F$ be a finite measure on $[0,+\infty[$, and define:
$$\overline{H}(t,y)=\int_0^{+\infty} e^{-\lambda t} u_\lambda(0,y) F(d\lambda).$$
We assume furthermore that:
\begin{enumerate}
\item if $\ell=-\infty$, then $\ell$ is a natural boundary,
\item if $\ell>-\infty$, then $\ell$ is an entrance-not-exit boundary.
\end{enumerate}
This implies that $\displaystyle\lim_{y\rightarrow \ell} u_\lambda(0,y)=+\infty$.
Now, since the function $y\longmapsto u_\lambda(0,y)$ is strictly decreasing, we may define a function $f:[0,+\infty[\longrightarrow ]\ell,+\infty[$ by
$$\forall t\geq0,\qquad \overline{H}(t,f(t))=1,$$
which is such that $\zeta(f)=+\infty$. Then:

\begin{proposition}
The density of $G_f$ is given by:
$$\Pb_x(G_f\in dt)=-\frac{q(t,x,f(t))}{s^\prime(f(t))} \;\left(\int_0^{+\infty} e^{-\lambda t}  \frac{\partial  u_\lambda(0,y) }{\partial y}|_{y=f(t)}F(d\lambda) \right) dt\qquad (0<t<+\infty).$$
\end{proposition}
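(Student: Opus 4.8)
The plan is to verify that the function $\overline{H}(t,y)=\int_0^{+\infty} e^{-\lambda t} u_\lambda(0,y)\, F(d\lambda)$ satisfies all the hypotheses of Lemma \ref{lem:mart}, and then to plug the resulting identity $H(t,y)=\Pb_y(T_{f\circ\theta_t}=+\infty)=1-\overline{H}(t,y)$ into Theorem \ref{theo:low}. First I would recall the basic identity for the resolvent kernel, namely that $(t,y)\longmapsto u_\lambda(0,y)$ is annihilated by $\G+\partial_t$ up to the factor $e^{\lambda t}$: more precisely, since $\G u_\lambda(0,\cdot)=\lambda u_\lambda(0,\cdot)$ on $]\ell,+\infty[$ away from the diagonal point $0$ (which here lies at or outside the left endpoint, so there is no issue on $]\ell,+\infty[$), one gets $\G\left(e^{-\lambda t}u_\lambda(0,y)\right)+\partial_t\left(e^{-\lambda t}u_\lambda(0,y)\right)=e^{-\lambda t}\left(\lambda u_\lambda(0,y)-\lambda u_\lambda(0,y)\right)=0$. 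Integrating against the finite measure $F$ (and justifying the interchange of $\G$, $\partial_t$ with the integral by dominated convergence, using that $F$ is finite and the kernels are smooth and monotone) gives $\G\overline{H}+\partial_t\overline{H}=0$ on $\mathcal{D}$.

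Next I would check the boundary and monotonicity conditions. The boundary condition $\overline{H}(t,f(t))=1$ on $\partial\mathcal{D}$ holds by the very definition of $f$, which is legitimate because $y\longmapsto u_\lambda(0,y)$ is strictly decreasing with limit $+\infty$ as $y\to\ell$ (this is where the entrance-not-exit / natural assumption on $\ell$ enters) and limit $0$ as $y\to+\infty$, so $y\longmapsto\overline{H}(t,y)$ is a continuous strictly decreasing bijection from $]\ell,+\infty[$ onto $]0,\int_0^{+\infty}e^{-\lambda t}F(d\lambda)[$; one should note in passing that this forces $f(t)$ to exist for all $t$, and since $\overline{H}(t,y)\to 0$ as $t\to+\infty$ for each fixed $y$ (again by dominated convergence, since $e^{-\lambda t}u_\lambda(0,y)\to 0$), we indeed have $\zeta(f)=+\infty$. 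Monotonicity of $\overline{H}$ in $y$ is inherited termwise from that of $u_\lambda(0,\cdot)$. For the asymptotic condition $\lim_{t,y\to+\infty}\overline{H}(t,y)=0$, both $e^{-\lambda t}\to 0$ and $u_\lambda(0,y)\to 0$, and boundedness by $F([0,+\infty[)$ lets dominated convergence conclude. Thus Lemma \ref{lem:mart} applies and yields $\Pb_y(T_{f\circ\theta_t}<+\infty)=\overline{H}(t,y)$, i.e. $H(t,y)=1-\overline{H}(t,y)$.

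The last step is to invoke Theorem \ref{theo:low}. One needs $H$ of class $\C^{1,2}$ — inherited from smoothness of $u_\lambda$ and differentiation under the integral sign — and the limit condition \eqref{eq:limH}, namely $\lim_{y\to+\infty}\frac{\partial H(t,y)}{s'(y)\partial y}q(t,x,y)=0$. Since $f$ here is decreasing (the level sets of a time-increasing family $\overline{H}(t,\cdot)$ move down as $t$ grows, because $\partial_t\overline{H}<0$), Remark \ref{rem:low} shows \eqref{eq:limH} is automatic once $H$ is smooth enough; alternatively one checks directly that $\frac{\partial H(t,y)}{s'(y)\partial y}=-\int_0^{+\infty}e^{-\lambda t}\frac{\partial u_\lambda(0,y)}{s'(y)\partial y}F(d\lambda)$ is bounded near $+\infty$ while $q(t,x,y)\to 0$. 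Plugging $\frac{\partial H(t,y)}{\partial y}=-\int_0^{+\infty}e^{-\lambda t}\frac{\partial u_\lambda(0,y)}{\partial y}F(d\lambda)$ evaluated at $y=f(t)$ into \eqref{theo:lawGf} gives exactly the stated formula. The main obstacle, I expect, is the bookkeeping of differentiation-under-the-integral and the verification of the boundary behaviour of $u_\lambda(0,y)$ at $\ell$ (the step that genuinely uses the entrance/natural hypothesis); the rest is a direct substitution.
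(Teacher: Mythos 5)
Your proposal follows essentially the same route as the paper: verify that $\overline{H}$ satisfies the hypotheses of Lemma \ref{lem:mart} (PDE by construction from the resolvent equation, monotonicity in $y$ inherited from $u_\lambda(0,\cdot)$, vanishing as $t,y\to+\infty$ since $+\infty$ is natural) and then substitute $H=1-\overline{H}$ into Theorem \ref{theo:low}; you simply spell out the verification of \eqref{eq:limH} and the differentiation under the integral, which the paper leaves implicit. One harmless slip: since $u_\lambda(0,y)\to+\infty$ as $y\to\ell$, the map $y\mapsto\overline{H}(t,y)$ is a bijection onto $]0,+\infty[$, not onto $]0,\int_0^{+\infty}e^{-\lambda t}F(d\lambda)[$ — which only makes the existence of $f(t)$ easier.
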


\begin{proof}
We need to check that the function  $\overline{H}$ satisfies the hypotheses of Lemma \ref{lem:mart}. \\
By construction, $\overline{H}$ is a solution to the partial differential equation (\ref{eq:PDE}).
It is also a decreasing function of $y$ and, since $+\infty$ is a natural boundary, we have from the monotone convergence theorem, $\displaystyle \lim_{t,y\rightarrow+\infty} \overline{H}(t,y)=0$. 

\end{proof}

\begin{example}[Brownian motion with drift]\ \\
Let $(X_t, t\geq0)$ be a Brownian motion with drift $\mu$, and choose $F(d\lambda) = 2\mu a \delta_0(d\lambda) + 2\sqrt{2b+\mu^2}\,\delta_b(d\lambda)$. Then:
$$\overline{H}(t,y)=ae^{-2\mu y} +e^{-bt-\left(\sqrt{2b+\mu^2}+\mu\right) y}.$$
Define:
$$\varphi(y)=-\frac{\left(\sqrt{2b+\mu^2}+\mu\right)y + \ln(1-ae^{-2\mu y})}{b}\qquad \text{ and}\quad  f(t)=\varphi^{-1}(t).$$
$f$ is a decreasing function such that $\displaystyle \lim_{t\rightarrow+\infty}f(t)=\frac{\ln(a)}{2\mu}$.
Then:
$$\Pb_x(G_f\in dt)=-\frac{q(t,x,f(t))}{s^\prime(f(t))}  \left(a\left(\sqrt{2b+\mu^2}-\mu\right)e^{-2\mu f(t)} -\sqrt{2b+\mu^2}-\mu  \right) dt.$$
\end{example}

\begin{example}[Bessel process of dimension 3]\ \\
Let $(X_t, t\geq0)$ be a Bessel process of dimension 3, and choose $F(d\lambda) = a \delta_0(d\lambda) + \delta_b(d\lambda)$. Then:
$$\overline{H}(t,y)=\frac{a}{y}+\frac{e^{-b t-\sqrt{2b}y}}{y}.$$
Define:
$$\varphi(y)=-\frac{\sqrt{2b}y +\ln(y-a)}{b}\qquad \text{ and}\quad f(t)=\varphi^{-1}(t).$$ 
$f$ is a decreasing function such that $\displaystyle \lim_{t\rightarrow+\infty}f(t)=a$.
Then :
$$\Pb_x(G_f\in dt)=\frac{q(t,x,f(t))}{s^\prime(f(t))}  \left(\frac{1}{f(t)}+\sqrt{2b} \left(1-\frac{a}{f(t)}\right)   \right) dt.$$

\end{example}

\subsection{Martingales constructed on the transition density}
We assume in this subsection that $(X_t, t\geq0)$  is defined on $(0,+\infty[$. 

\subsubsection{Preliminaries}

We start by recalling a few properties of the transition density $(t,x)\longmapsto q(t,x,0)$.
From Kotani \& Watanabe \cite{KW}, it is known that :
 \begin{equation}\label{eq:KW}
 \lim_{t\rightarrow 0} (-2t) \log\left(q(t,x,y)\right)=\left(\int_x^y \sqrt{\frac{\rho(z)s^\prime(z)}{2}}dz   \right)^2.
 \end{equation}
In \cite{KW}, this formula was obtained in the special case $s(x)=x$, but, assuming that $s$ is a strictly increasing function of $\C^1$ class, Formula (\ref{eq:KW}) follows easily from the fact that $s(X)$ is a diffusion on natural scale.  
In particular, for $x>0$, we deduce that:
 \begin{equation}\label{eq:KWq}
 \lim_{t\rightarrow 0}q(t,x,0)=0.
 \end{equation}
Next, we define a new diffusion $(\overline{X}_t, t\geq0)$ whose speed measure  $\overline{m}(dx)=\overline{\rho}(x)dx$ and scale function $\overline{s}$  are given by Biane's transform:
\begin{equation*}
\begin{cases}
\displaystyle \overline{\rho}(x)=(m([0,x]))^2 s^\prime(x) \\
\displaystyle \overline{s}(x)=\frac{1}{m([0,+\infty[)}-\frac{1}{m([0,x])}.
\end{cases}
\end{equation*}
It is known from \cite[Chapter 8]{PRY} that the transition densities of $X$ and $\overline{X}$ satisfy the following relation:
 \begin{equation}\label{eq:Bia}
  q(t,y,0)=\int_y^\infty \overline{q}(t,0,z) m([0,z]) s^\prime(z) dz,
  \end{equation}
which implies in particular that for every $t>0$, the function $y\longmapsto q(t,y,0)$ is decreasing and tends toward 0 as $y\longrightarrow+\infty$.

\subsubsection{First example}
Let $\zeta,c>0$ and consider the function:
$$\overline{H}(t,y)= \frac{1}{c} q(\zeta-t,y,0).$$
When $\displaystyle c<\inf_{t<\zeta}q(\zeta-t, 0,0)$ we may define a boundary $f$ by:
$$ q(\zeta-t,f(t),0)=c.$$
In this set-up, $\zeta=\inf\{t\geq0;\; f(t)=0\}=\zeta(f)$ from (\ref{eq:KWq}). Then:

\begin{proposition}
The density of $G_f$ is given by:
$$\Pb_x(G_f\in dt)=- \frac{q(t,x,f(t))}{cs^\prime(f(t))} \frac{\partial q(\zeta-t,y,0)}{\partial y} |_{y=f(t)} dt\qquad (0<t<\zeta).$$
\end{proposition}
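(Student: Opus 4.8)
The plan is to check that the function $\overline{H}(t,y)=\frac{1}{c}\,q(\zeta-t,y,0)$ fulfils the hypotheses of Lemma~\ref{lem:mart} — which then identifies $H(t,y):=\Pb_y(T_{f\circ\theta_t}=+\infty)$ with $1-\overline{H}(t,y)$ — and afterwards to feed this into Theorem~\ref{theo:low}. Recall that here $\ell=0$ and that, by the discussion preceding the statement, $f$ is a well-defined $\C^1$ boundary with $\overline{H}(t,f(t))=1$ and $\zeta(f)=\zeta$.

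First I would run through the boundary-value problem (\ref{eq:PDE}). The regularity $\overline{H}\in\C^{1,2}$ on $\mathcal{D}$ is inherited from the $\C^{1,2,2}$-regularity of $q$, since any $(t,y)\in\mathcal{D}$ satisfies $\zeta-t\in\,]0,\zeta[$. For the equation $\G\overline{H}+\frac{\partial\overline{H}}{\partial t}=0$, apply (\ref{eq:q}) with the second spatial argument frozen equal to $0$: the map $(t,y)\mapsto q(t,y,0)$ solves $\G q=\frac{\partial q}{\partial t}$, so $\G_y q(\zeta-t,y,0)$ equals the time-derivative of $q$ evaluated at $(\zeta-t,y,0)$, which is precisely $-c\,\frac{\partial\overline{H}}{\partial t}(t,y)$. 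The boundary condition $\overline{H}(t,f(t))=1$ is the definition of $f$, and (\ref{eq:KWq}) forces $f(t)\to 0$ as $t\to\zeta$, so that $\zeta(f)=\zeta<+\infty$. Monotonicity of $\overline{H}$ in $y$ and the two-sided bound $\overline{H}:\mathcal{D}\to[0,1]$ both come from Biane's relation (\ref{eq:Bia}), which shows that $y\mapsto q(\zeta-t,y,0)$ is strictly decreasing; hence for $y\ge f(t)$ one has $0\le q(\zeta-t,y,0)\le q(\zeta-t,f(t),0)=c$. Finally, as $\zeta(f)<+\infty$, the relevant terminal condition is $\lim_{t\to\zeta}\overline{H}(t,y)=\frac{1}{c}\lim_{s\to 0}q(s,y,0)=0$ for every $y>0$, which is exactly (\ref{eq:KWq}). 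Lemma~\ref{lem:mart} then gives $\overline{H}(t,y)=\Pb_y(T_{f\circ\theta_t}<+\infty)$.

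It remains to verify the hypotheses of Theorem~\ref{theo:low} with $H=1-\overline{H}$. Its $\C^{1,2}$-regularity is clear. For the boundary condition (\ref{eq:limH}), I would differentiate Biane's identity (\ref{eq:Bia}) in $y$, obtaining $\dfrac{\partial q(s,y,0)}{s'(y)\,\partial y}=-\overline{q}(s,0,y)\,m([0,y])$, so that
$$\frac{\partial H(t,y)}{s'(y)\,\partial y}\,q(t,x,y)=\frac{1}{c}\,\overline{q}(\zeta-t,0,y)\,m([0,y])\,q(t,x,y).$$
This vanishes as $y\to+\infty$ because both transition densities decay in a Gaussian fashion with respect to the intrinsic metric: by (\ref{eq:KW}) and the elementary identity $\overline{\rho}\,\overline{s}'=\rho\,s'$, the functions $q(t,x,\cdot)$ and $\overline{q}(\zeta-t,0,\cdot)$ decay like the exponential of $-(\mathrm{const})\bigl(\int_0^y\sqrt{\rho s'/2}\bigr)^2$, and $\int_0^{+\infty}\sqrt{\rho(z)s'(z)/2}\,dz=+\infty$ since $+\infty$ is natural, so the polynomial factor $m([0,y])$ is swamped. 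Combining with Theorem~\ref{theo:low} and inserting $\partial_y H|_{y=f(t)}=-\frac{1}{c}\,\partial_y q(\zeta-t,y,0)|_{y=f(t)}$ into (\ref{theo:lawGf}) yields the announced density (defective, as in Theorem~\ref{theo:low}, for $x>f(0)$).

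The step I expect to be the main obstacle is the verification of (\ref{eq:limH}). In contrast with the resolvent examples of Section~\ref{sub31}, the boundary defined here as a level curve of $(t,y)\mapsto q(\zeta-t,y,0)$ need not be monotone (this already fails for Brownian motion), so Remark~\ref{rem:low} does not apply directly, and one must genuinely pass through Biane's transform together with short-time (equivalently, large-distance) heat-kernel estimates. Everything else is bookkeeping with the facts collected in the preliminaries of this subsection.
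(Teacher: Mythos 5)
Your argument follows the paper's proof exactly: verify that $\overline{H}(t,y)=\frac{1}{c}\,q(\zeta-t,y,0)$ satisfies the boundary-value problem of Lemma \ref{lem:mart} --- the PDE via (\ref{eq:q}), monotonicity and the range $[0,1]$ via (\ref{eq:Bia}), and the terminal condition via (\ref{eq:KWq}) --- and then feed $H=1-\overline{H}$ into Theorem \ref{theo:low}. Your extra verification of (\ref{eq:limH}) through Biane's transform goes beyond what the paper records (its proof is silent on that hypothesis); the only caveat is that (\ref{eq:KW}) is a small-time asymptotic at fixed endpoints, so invoking it for large-$y$ decay at fixed $t$ is a plausible heuristic rather than a complete justification.
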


\begin{proof}
From (\ref{eq:q}), the function $\overline{H}$ is a solution of the PDE (\ref{eq:PDE}), which is decreasing from (\ref{eq:Bia}) and, for every $y>0$, $\displaystyle \lim_{t\rightarrow \zeta}\overline{H}(t,y)=0$ from (\ref{eq:KWq}).\\
\end{proof}
Note that the martingale $(\overline{H}(t,X_t), t<\zeta)$ appears as a density when constructing diffusion bridges via Doob's $h$-transform, see Fitzsimmons, Pitman \& Yor \cite{FPY}.
\begin{example}[Radial Ornstein-Uhlenbeck processes]\ \\
Let $(X_t,t\geq0)$ be a squared radial Ornstein-Uhlenbeck process with parameters $\nu>-1$ and $\gamma<0$, see Example \ref{exa:ROU}. Its transition density function reads (see \cite[p.142]{BS}):
$$q^{(\nu,\gamma)}(t,y,0)=\frac{\gamma^{\nu+1} e^{\gamma (\nu+1)t}}{2^{\nu+1}\Gamma(\nu+1) (\sinh(\gamma t))^{\nu+1}}\exp\left(-\frac{\gamma e^{-\gamma t} y^2}{2\sinh(\gamma t)}\right). $$
Let $\alpha \in]0,1[$, $\zeta\in]0,+\infty[$ and choose:
$$c =\alpha q^{(\nu,\gamma)}(\zeta,0,0)$$
With these values,  the boundary $f^{(\nu,\gamma)}$ is then defined, for $t< \zeta$ by:
$$f^{(\nu,\gamma)}(t)=\sqrt{-\frac{2}{\gamma} \sinh(\gamma (\zeta-t))e^{\gamma (\zeta-t)} \ln\left(\alpha \left(\frac{\sinh(\gamma( \zeta-t))}{\sinh(\gamma \zeta)} \right)^{\nu+1}e^{\gamma (\nu+1)t}\right)}$$
and the density of the r.v. $G_f$ is given by:
$$\Pb_x(G_{f^{(\nu,\gamma)}}\in dt)= \frac{\gamma e^{-\gamma (\zeta-t)}}{c\sinh(\gamma (\zeta-t))} \left( f^{(\nu,\gamma)}(t) \right)^{2\nu+2} e^{-\gamma\left( f^{(\nu,\gamma)}(t)\right)^2} q^{(\nu,\gamma)}(t,x,f^{(\nu,\gamma)}(t))dt\qquad (0<t<\zeta). $$
\end{example}

\begin{example}[Bessel process]\ \\
Letting $\gamma\rightarrow 0$ in the previous example, we obtain the following boundary for the Bessel process of index $\nu$:
$$f^{(\nu)}(t)=\sqrt{-2  (\zeta-t) \ln\left(\alpha \left(1-\frac{t}{\zeta}\right)^{\nu+1}\right)},$$
and the density for the last passage time to $f^{(\nu)}$ reads:
$$\Pb_x^{(\nu)}(G_{f^{(\nu)}}\in dt)= \frac{1}{c(\zeta-t)} \left( f^{(\nu)}(t) \right)^{2\nu+2}  q^{(\nu)}(t,x,f^{(\nu)}(t))dt,\qquad (0<t<\zeta), $$
with $\displaystyle c=\frac{\alpha}{2^{\nu+1}\Gamma(\nu+1)t^{\nu+1}}$.
\end{example}

\subsubsection{Second example}
We assume that $0$ is an entrance-not-exit boundary point and that $(X_t, t\geq0)$ is transient and goes towards $+\infty$ as $t\rightarrow+\infty$. Let $h:[0,+\infty[\longrightarrow ]0,+\infty[$ be a bounded function of $\C^1$ class with bounded derivative. We consider the function: 
 $$\overline{H}(t,y)=\int_0^{+\infty}h(t+u)q(u,y,0)du.$$
 Since we have assumed that 0 is entrance-not-exit,  $u_\lambda(0,y)\xrightarrow[y\rightarrow0]{}+\infty$, i.e. $q(t,0,0)$ is not integrable at 0. Therefore, for every $t\geq0$, the function $y\longmapsto \overline{H}(t,y)$ is decreasing from $+\infty$ to $0$, and we may define a function $f$ by:
 $$\int_0^{+\infty}h(t+u)q(u,f(t),0)du=1.$$
 Note that by construction, $\zeta(f)=+\infty$.

\begin{proposition}
The density of $G_f$ is given by:
$$\Pb_x(G_f\in dt)=-\frac{q(t,x,f(t))}{s^\prime(f(t))}\left( \int_0^{+\infty}h(u+t)\frac{\partial q(u,y,0)}{\partial y} |_{y=f(t)}du  \right) dt\qquad (t>0).$$
\end{proposition}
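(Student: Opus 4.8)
The plan is to show that $\overline{H}$ satisfies the hypotheses of Lemma \ref{lem:mart}, so that $\overline{H}(t,y)=\Pb_y(T_{f\circ\theta_t}<+\infty)$, and then to apply Theorem \ref{theo:low} to the function $H:=1-\overline{H}$, reading off the claimed density from \eqref{theo:lawGf}.

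First I would collect the preliminary facts and check the hypotheses of Lemma \ref{lem:mart}. Differentiation under the integral sign is licit here because $h$ and $h^\prime$ are bounded, $q$ is $\C^{1,2,2}$, and $u\longmapsto q(u,y,0)$ is integrable on $]0,+\infty[$ for each $y>0$ by transience (call $u_0(x,y)=\int_0^{+\infty}q(t,x,y)\,dt$ the potential kernel of $X$, finite and vanishing at $+\infty$); this gives the $\C^{1,2}$ regularity of $\overline{H}$, the strict monotonicity $\partial_y\overline{H}<0$ (so that $f$ is of $\C^1$ class by the implicit function theorem), and, since $1=\overline{H}(t,f(t))\le\|h\|_\infty\,u_0(f(t),0)$ together with the strict decrease of $y\longmapsto u_0(y,0)$ forces $f$ to be bounded, the fact that $\lim_{t\to+\infty}(X_t-f(t))=+\infty$ a.s.\ — which is exactly Assumption \ref{Ass} and condition \eqref{eq:up}. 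For the partial differential equation, \eqref{eq:q} yields $\G\overline{H}(t,y)=\int_0^{+\infty}h(t+u)\,\partial_u q(u,y,0)\,du$, and an integration by parts in $u$ turns the right-hand side into $-\int_0^{+\infty}h^\prime(t+u)q(u,y,0)\,du=-\partial_t\overline{H}(t,y)$, the boundary term at $u=0$ vanishing by \eqref{eq:KWq} and the one at $u=+\infty$ vanishing because $h$ is bounded and $q(u,y,0)\to 0$ as $u\to+\infty$; hence $\G\overline{H}+\partial_t\overline{H}=0$. Finally $\overline{H}(t,f(t))=1$ is the definition of $f$, monotonicity in $y$ (whence $\overline{H}\le 1$ on $\mathcal{D}$) comes from \eqref{eq:Bia}, and $0\le\overline{H}(t,y)\le\|h\|_\infty\,u_0(y,0)\to 0$ as $y\to+\infty$ uniformly in $t$ gives $\lim_{t,y\to+\infty}\overline{H}(t,y)=0$. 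By Lemma \ref{lem:mart}, $\overline{H}(t,y)=\Pb_y(T_{f\circ\theta_t}<+\infty)$.

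It then remains to verify the integrability condition \eqref{eq:limH} of Theorem \ref{theo:low} for $H=1-\overline{H}$. Differentiating \eqref{eq:Bia} in its lower limit, $\partial_y q(u,y,0)=-\overline{q}(u,0,y)\,m([0,y])\,s^\prime(y)$, so that
$$\frac{\partial H(t,y)}{s^\prime(y)\,\partial y}=m([0,y])\int_0^{+\infty}h(t+u)\,\overline{q}(u,0,y)\,du\;\le\;\|h\|_\infty\,m([0,y])\,\overline{u}_0(0,y),$$
where $\overline{u}_0$ is the $0$-potential of $\overline{X}$. Since $\overline{X}$ has scale $\overline{s}(x)=\tfrac{1}{m([0,+\infty[)}-\tfrac{1}{m([0,x])}$, which is bounded above by $0$, it is transient with $\overline{u}_0(0,y)=-\overline{s}(y)$, and hence $m([0,y])\,\overline{u}_0(0,y)=1-\tfrac{m([0,y])}{m([0,+\infty[)}\le 1$; thus $\partial H/(s^\prime\partial y)$ is bounded in $y$, and multiplying by $q(t,x,y)\to 0$ (a natural boundary at $+\infty$) yields \eqref{eq:limH}. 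Theorem \ref{theo:low} then applies, and substituting $\partial_y H(t,y)=-\int_0^{+\infty}h(t+u)\,\partial_y q(u,y,0)\,du$ into \eqref{theo:lawGf} produces the announced formula.

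I expect the main obstacle to be precisely this last verification of \eqref{eq:limH}: it really hinges on Biane's identity \eqref{eq:Bia}, which is what converts the a priori intractable $\tfrac{1}{s^\prime(y)}\partial_y q(u,y,0)$ into $-\overline{q}(u,0,y)\,m([0,y])$ and so produces a bound on $\partial H/(s^\prime\partial y)$ uniform in $y$; reading off the $0$-potential of $\overline{X}$ also calls for a little care according to whether $m([0,+\infty[)$ is finite or infinite. By contrast, the two differentiations under the integral sign, the integration by parts in $u$, and the vanishing of the associated boundary terms are routine, though they do require transience, \eqref{eq:KWq} and the boundedness of $h$ to be invoked.
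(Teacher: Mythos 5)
Your proposal is correct and follows essentially the same route as the paper: verify the hypotheses of Lemma \ref{lem:mart} for $\overline{H}$ (the decay at infinity by dominated convergence, the PDE by an integration by parts in $u$ whose boundary terms vanish thanks to (\ref{eq:KWq}) and transience) and then invoke Theorem \ref{theo:low}. You go further than the paper's own proof in two places it leaves implicit — checking Assumption \ref{Ass} via the boundedness of $f$, and checking condition (\ref{eq:limH}) by using Biane's identity (\ref{eq:Bia}) to bound $\frac{\partial H(t,y)}{s^\prime(y)\partial y}$ by $\|h\|_\infty$ — and both of these supplementary verifications are sound.
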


\begin{remark}
We may recover (partly) the result of Subsection \ref{sub31}  by taking $h(u)=\int_0^{+\infty} e^{-\lambda u} F(d\lambda)$. Indeed, in this case, from Fubini-Tonelli:
$$\int_0^{+\infty}h(u+t) q(u,y,0)du = \int_0^{+\infty}   \left(\int_0^{+\infty} e^{-\lambda (t+u)} F(d\lambda)\right)  q(u,y,0)du =  \int_0^{+\infty} e^{-\lambda t} u_\lambda (y,0)F(d\lambda). $$
\end{remark}

\begin{proof}
We need to prove that $\overline{H}$ satisfies the hypotheses of Lemma \ref{lem:mart}.\\
Since $h$ is bounded, and for $y>0$, $u\longmapsto q(u,y,0)$ is integrable,  the dominated convergence theorem implies that :
$$\lim_{t,y\rightarrow +\infty} \overline{H}(t,y)=0.$$
Then, integrating by parts, for $y>0$:
\begin{align*}
\frac{\partial \overline{H}(t,y)}{\partial t} &=\int_0^{+\infty} h^\prime(t+u) q(u,y,0)du \qquad (\text{which is finite since $h^\prime$ is bounded})\\
&= \Big[h(t+u)q(u,y,0)\Big]_0^{+\infty} - \int_0^{+\infty} h(t+u) \frac{\partial q(u,y,0)}{\partial u}du\\
&=-  \left(\int_0^{+\infty} h(t+u) \G q(u,y,0)du\right)\\
&=-\G \overline{H}(t,y),
\end{align*}
which ends the proof.\\
\end{proof}

\begin{remark}
We may remove the hypothesis $(X_t, t\geq0)$ is transient if we replace the assumption on $h$ by~: $h$ is a decreasing and integrable function of $\C^1$ class. Indeed, since for $y>0$ the function $u\longmapsto q(u,y,0)$ is bounded, $\overline{H}$ is well-defined and so is $\int_0^{+\infty} h^\prime(t+u) q(u,y,0)du$. Besides, we still have $\displaystyle \lim_{t,y\rightarrow +\infty} \overline{H}(t,y)=0$ from monotone convergence.
\end{remark}

\section{Inverting time}\label{sec4}

Consider a diffusion $(X_t, t\geq0)$ enjoying the inversion property in the sense of Watanabe \cite{Wat}, i.e. such that the process $(\overline{X}_t=tX_{\frac{1}{t}}, t\geq0)$ is also a linear regular conservative diffusion.
Let $f$ be a continuous function and define:
$$\overline{f}:t\longmapsto tf\left(\frac{1}{t}\right).$$
Then, if  $\displaystyle \overline{X}_0\neq \lim_{t\rightarrow0}\overline{f}(t)$:
\begin{align*}
\inf\left\{t\geq0;\, \overline{X}_t=\overline{f}(t)\right\} &= \inf\left\{t\geq0;\, tX_{\frac{1}{t}}=tf\left(\frac{1}{t}\right)\right\}\\
&=\inf\left\{t\geq0;\, X_{\frac{1}{t}}=f\left(\frac{1}{t}\right)\right\}  \\
&= \frac{1}{\sup\{u\geq0;\, X_u=f(u)\}}.
\end{align*}
In particular, under the hypotheses of Theorem \ref{theo:low} or \ref{theo:up}, the density of the first hitting time of $\overline{f}$ by $(\overline{X}_t, t\geq0)$ admits the expression:
$$\overline{\Pb}_{\overline{x}}(T_{\overline{f}}\in dt)=  \frac{1}{t^2}\Phi\left(\frac{1}{t}\right) q\left(\frac{1}{t},x,f\left(\frac{1}{t}\right)\right)dt.$$

\subsection{Brownian motion}
Consider a Brownian motion $(B_t, t\geq0)$ started from 0. Then, it is well-known that $(\overline{B}_t, t\geq0)$ is also a Brownian motion started at 0.

\begin{example}
Take $f(u)=a+b u^2$ with $a,b>0$. Then, $\displaystyle \overline{f}(t)=at+\frac{b}{t}$ and :
$$\inf\left\{t\geq0;\; \overline{B}_t=at+\frac{b}{t}\right\} = \frac{1}{\sup\{u\geq0;\; B_u=a+bu^2\}}$$
From Example \ref{ex:Bt2}, we deduce that:
$$\overline{\Pb}_0(T_{\overline{f}}\in dt)=  \Phi\left(\frac{1}{t}\right) \frac{1}{2t\sqrt{2\pi t}} \exp\left(-\frac{1}{2t}\left(at+\frac{b}{t}\right)^2   \right)dt$$
where $\Phi$ is defined by :
$$\Phi\left(\frac{1}{t}\right) = \frac{2b}{t} - \psi^\prime\left(a+\frac{b}{t^2}\right)$$
with
$$\psi(y)=-2 (bc)^2\exp\left(\frac{2}{3}\frac{b^2}{t^3}\right)
\int_0^{+\infty}   \exp\left(-\frac{2}{3}b^2\left(u+\frac{1}{t}\right)^3  \right)  \sum_{k=0}^{+\infty} \exp\left(\frac{\lambda_k}{c}u \right)\frac{\text{Ai}\left(\lambda_k+2bc\left(a-y+\frac{b}{t^2}\right)\right)}{\text{Ai}^\prime(\lambda_k)}du.
$$
with $(\lambda_k)_{k \geq0}$ the negative zeroes of the Airy function Ai and $c=(2b^2)^{-1/3}$.

\end{example}

\subsection{Bessel processes with drift}
Consider a Bessel process $(R_t, t\geq0)$ with index $\nu>0$ and drift $c\geq0$ started from $x$. $(R_t, t\geq0)$ is a diffusion whose generator is given by:
$$\G^{(\nu,c)}= \frac{1}{2}\frac{\partial^2}{\partial x^2} +\left(\frac{2\nu+1}{2x}+c \frac{I_{\nu+1}(c x)}{I_\nu(c x)} \right)\frac{\partial}{\partial x}.$$
This process was first introduced by Watanabe in \cite{Wat} as a generalization of Bessel processes. For integer dimension of $n=2(\nu+1)\in \N$, $(R_t, t\geq0)$ has the same law as the norm $\| \vec{B_t}+ \vec{\mu} \centerdot \vec{ t}\|$ where $(\vec{B_t}, t\geq0)$ is an $n$-dimensional Brownian motion and $\| \vec{ \mu}\|= c$.\\
The scale function of $R$ is given by:
$$(s^{(\nu,c)})^\prime(x)=\frac{1}{(\Gamma(\nu+1))^2 }\left(\frac{c}{2}\right)^{2\nu}\frac{1}{x I_\nu^2(c x)}$$
and its speed measure by:
$$m^{(\nu,c)}(dx)= 2 (\Gamma(\nu+1))^2\left(\frac{2}{c}\right)^{2\nu}x I_\nu^2(c x)dx.$$
From Watanabe \cite[Theorem 2.1]{Wat}, the process $\displaystyle \left(\overline{R}_t=tR_{\frac{1}{t}}, t\geq0\right)$ is a Bessel process with index $\nu$ and drift $x$ started from $c$.
\begin{example}
 Take $f(u)=a+bu$ with $a,b>0$. Then $\overline{f}(t)=b+at$ and 
$$\inf\{t\geq0;\, \overline{R}_t=b+at\} =  \frac{1}{\sup\{u\geq0;\, R_u=a+bu\} }$$
which leads to the density of the r.v. $T_{b+a\centerdot}$ under the form:
$$\Pb_0^{(\nu,x)}(T_{b+a\centerdot}\in dt)=\frac{1}{2t} \Phi\left(\frac{1}{t}\right) \left(x\left(a+\frac{b}{t}\right)\right)^{-\nu} \exp\left(-\frac{x^2t^2+(at+b)^2}{2t}\right) I_\nu\left(x(a t+b)\right)dt $$
where the function $\Phi$ is given by (see Example \ref{ex:Ba+b}): 
$$\Phi\left(\frac{1}{t}\right) = \left(a+\frac{b}{t}\right)^{2\nu+1}\left(b-\psi^\prime\left(a+\frac{b}{t}\right)\right)
$$
with
$$\psi(y)=\int_0^{+\infty}  \left(1+\frac{bt}{at+b}u\right)^{\nu-1} \sum_{k= 1}^{+\infty}  t^2\frac{(yt)^{-\nu} j_{\nu,k}J_\nu(j_{\nu,k} \frac{yt}{at+b})}{(at+b)^{2-\nu} J_{\nu+1}(j_{\nu,k})}\exp\left(-\frac{u\,t^2\, j^2_{\nu,k} }{2(at+b)(at+b(1+tu))}-\frac{b^2}{2}u\right)du,
$$
where $J_\nu$ denotes the Bessel function of the first kind and $(j_{\nu,k})_{k\geq0}$ the ordered sequence of its positive zeroes.
 \end{example}

\section{On an integral equation}\label{sec5}

\noindent
We set:
$$\Phi(t)=\frac{1}{s^\prime(f(t))}\frac{\partial }{\partial y}\Pb_y\left(T_{f\circ\theta_t}=+\infty\right)|_{y=f(t)}.$$
Integrating (\ref{theo:lawGf}) and (\ref{theo:lawGf2}) with respect to $t$, we deduce the following corollary:

\begin{corollary}\label{cor:phi}
Assume that the hypotheses of Theorem \ref{theo:low}, resp. \ref{theo:up}, are satisfied. Then, the function $\Phi$  is a solution of the following Fredholm equation of the first kind:
\begin{enumerate}[$\bullet$]
\item For lower boundaries:
\begin{equation*}
\int_0^{\zeta(f)}\Phi(t) q(t,x,f(t)) dt=1, \qquad  x< f(0).
\end{equation*}
\item Resp, for upper boundaries:
\begin{equation*}
\int_0^{+\infty}\Phi(t) q(t,x,f(t)) dt=1, \qquad  x> f(0).
\end{equation*}
\end{enumerate}
\end{corollary}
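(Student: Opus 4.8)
The plan is to read off both identities by integrating, over the full support of $G_f$, the density formulas already established. From (\ref{theo:lawGf}) in Theorem~\ref{theo:low} one recognizes $\frac{1}{s'(f(t))}\frac{\partial H(t,y)}{\partial y}|_{y=f(t)}$ as $\Phi(t)$, so that $\Pb_x(G_f\in dt)=\Phi(t)\,q(t,x,f(t))\,dt$ on $]0,\zeta(f)[$; likewise (\ref{theo:lawGf2}) in Theorem~\ref{theo:up} expresses $\Pb_x(G_f\in dt)$ in terms of $\Phi(t)\,q(t,x,f(t))\,dt$ on $]0,+\infty[$ for upper boundaries. Since in each case the right-hand side is the law of a nonnegative random variable, integrating in $t$ over the open support involves no convergence subtlety and yields $\int_0^{\zeta(f)}\Phi(t)\,q(t,x,f(t))\,dt=\Pb_x\big(0<G_f<\zeta(f)\big)$ in the lower case, and the analogous relation over $]0,+\infty[$ with $\Pb_x\big(0<G_f<+\infty\big)$ in the upper case.

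It then remains to check that these probabilities are equal to $1$ under the stated restriction on the starting point, which amounts to two observations. First, $G_f$ lies in the open support almost surely: for a lower boundary, $G_f<\zeta(f)$ $\Pb_x$-a.s. is exactly Assumption~(\ref{eq:up}), already invoked in the proof of Theorem~\ref{theo:low}; for an upper boundary, the standing hypothesis $\Pb_x\big(\lim_{t\to+\infty}(X_t-f(t))<0\big)=1$ forces the last crossing of $f$ to occur at a finite time, so $G_f<+\infty$ a.s. Second, the law of $G_f$ has no atom at $0$ precisely when $x$ is on the stated side of $f(0)$: if $f$ is a lower boundary and $x<f(0)$, the path starts strictly below $f$ while, by~(\ref{eq:up}), it eventually remains strictly above $f$, so by continuity of the trajectories and the intermediate value theorem the (closed) set $\{t\ge0:X_t=f(t)\}$ is $\Pb_x$-a.s. nonempty, whence $G_f\ge T_f>0$ a.s.; the argument for an upper boundary with $x>f(0)$ is symmetric. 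Combining the two observations gives $\Pb_x\big(0<G_f<\zeta(f)\big)=1$, resp. $\Pb_x\big(0<G_f<+\infty\big)=1$, which is the announced Fredholm equation of the first kind.

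There is no genuinely hard step here: the statement is a bookkeeping consequence of Theorems~\ref{theo:low} and~\ref{theo:up}, recording the elementary fact that a true (sub-)probability density integrates to one exactly when it is not defective. The two points deserving a little care are the sign bookkeeping for $\Phi$ inherited from~(\ref{theo:lawGf})--(\ref{theo:lawGf2}) in the two cases, and the ``no atom at $0$'' claim, which relies on path continuity together with the respective eventual-sign hypotheses on $X_t-f(t)$; the restriction on $x$ is essential rather than cosmetic, since for $x$ on the other side of $f(0)$ the total mass drops strictly below $1$ and the identity fails.
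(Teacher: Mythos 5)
Your proof is correct and follows essentially the same route as the paper, which simply integrates the density formulas (\ref{theo:lawGf}) and (\ref{theo:lawGf2}) over $t$ and uses that the law of $G_f$ is non-defective for $x$ on the stated side of $f(0)$. The extra details you supply (no atom at $0$ via path continuity and the intermediate value theorem, and $G_f<\zeta(f)$, resp. $G_f<+\infty$, a.s. from the eventual-sign hypotheses) are exactly what the paper leaves implicit.
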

In some particular cases, this equation may charaterize uniquely $\Phi$, hence the law of $G_f$. 
We give below a few examples of this situation, where time inversion is involved.

\subsection{A link with time inversion}

\begin{theorem}\label{theo:inteq}\ \\
\vspace*{-.6cm}
\begin{enumerate}[$i)$]
\item Let $(B_t^{(\mu)}, t\geq0)$ be a Brownian motion with drift $\mu>0$ started from $x$ and  $f$ be a continuous function on $[0,+\infty[$ such that 
$$f(0)=0 \qquad \text{and}\qquad \lim_{t\rightarrow +\infty} \frac{f(t)}{ t}<\mu.$$ 
Assume that the equation :
\begin{equation*}
\forall x< 0,\qquad \int_0^{+\infty}\Phi(t) q(t,x,f(t)) dt=1,
\end{equation*}
admits a unique solution $\Phi$. Then :
$$\Pb_x^{(\mu)}\left(G_f\in dt\right) =\Phi(t)q(t,x,f(t))dt.$$
\item Let $(R_t, t\geq0)$ be a Bessel process with index $\nu>-1$ and drift $c\geq0$ started from $x>0$, and let $f$ be a continuous function on $[0,+\infty[$ such that:
$$f(0)=0 \qquad \text{and}\qquad \lim_{t\rightarrow +\infty} \frac{f(t)}{t}>c.$$ 
Assume that the equation
\begin{equation*}
\forall x>0,\qquad \int_0^{+\infty}\Phi(t) q^{(\nu,c)}(t,x,f(t)) dt=1,
\end{equation*}
admits a unique solution $\Phi$. Then for $x>0$:
$$\Pb_x^{(\nu,c)}\left(G_f\in dt\right) =\Phi(t)q^{(\nu,c)}(t,x,f(t))dt.$$
\end{enumerate}
\end{theorem}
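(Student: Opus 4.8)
The strategy is to apply the time‑inversion machinery of Section \ref{sec4} together with the known first‑passage results, and then invoke the assumed uniqueness of the Fredholm equation to identify $\Phi$. I will treat the two cases in parallel, since both the Brownian motion with drift $\mu$ and the Bessel process with index $\nu$ and drift $c$ enjoy Watanabe's inversion property: the inverted process $(\overline{X}_t = tX_{1/t}, t\ge0)$ is again a diffusion of the same family, with the roles of the initial point and the drift parameter exchanged (for the Brownian case $\overline{B}$ is a Brownian motion started from $0$ — after we note $f(0)=0$ forces the relevant starting point to be at the origin — and for the Bessel case $\overline{R}$ is a Bessel process with index $\nu$ and drift $x$ started from $c$, as recalled in Section \ref{sec4}). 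First I would set $\overline{f}(t) = tf(1/t)$ and use the identity established in Section \ref{sec4},
$$
\inf\{t\ge0;\ \overline{X}_t = \overline{f}(t)\} = \frac{1}{\sup\{u\ge0;\ X_u=f(u)\}} = \frac{1}{G_f},
$$
valid when $\overline{X}_0 \ne \lim_{t\to0}\overline{f}(t)$; the hypothesis $f(0)=0$ is exactly what is needed to make $\overline{f}$ well behaved near $0$ and to separate the starting point from the boundary.

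Next I would check that the inverted data $(\overline{X},\overline{f})$ satisfy the hypotheses of Theorem \ref{theo:up} (an \emph{upper} boundary for the inverted process, since the growth conditions $\lim f(t)/t < \mu$, resp. $>c$, translate after inversion into the requirement that $\overline{X}_t - \overline{f}(t)$ has the correct sign in the limit, guaranteeing $G_f<\infty$ and hence a genuine first hitting time for $\overline{X}$). Granting this, Theorem \ref{theo:up} gives a closed expression for the density of $T_{\overline{f}}$ for the inverted process, and the change‑of‑variables formula displayed at the end of Section \ref{sec4},
$$
\overline{\Pb}_{\overline{x}}(T_{\overline{f}}\in dt) = \frac{1}{t^2}\,\Phi\!\left(\tfrac1t\right) q\!\left(\tfrac1t,x,f\!\left(\tfrac1t\right)\right)dt,
$$
shows that the law of $G_f$ under $\Pb_x$ has a density of the form $\widetilde\Phi(t)\,q(t,x,f(t))\,dt$ for some function $\widetilde\Phi$ (obtained by inverting the substitution $t\mapsto1/t$ and transporting the Jacobian). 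In particular, by Corollary \ref{cor:phi}, this $\widetilde\Phi$ is itself a solution of the Fredholm equation $\int_0^{+\infty}\widetilde\Phi(t)\,q(t,x,f(t))\,dt = 1$ for all $x<f(0)=0$ (resp. $x>0$ in the Bessel case, using $q^{(\nu,c)}$).

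Finally, since the equation is assumed to admit a \emph{unique} solution $\Phi$, we must have $\widetilde\Phi = \Phi$, which yields $\Pb_x^{(\mu)}(G_f\in dt) = \Phi(t)q(t,x,f(t))dt$, resp. $\Pb_x^{(\nu,c)}(G_f\in dt)=\Phi(t)q^{(\nu,c)}(t,x,f(t))dt$. The main obstacle, and the step requiring genuine care, is verifying that the inverted pair $(\overline{X},\overline{f})$ actually meets all the regularity and boundary‑behaviour hypotheses of Theorem \ref{theo:up} — in particular that the function $\overline{H}(t,y)=\overline{\Pb}_y(T_{\overline{f}\circ\theta_t}=+\infty)$ is of class $\C^{1,2}$ and satisfies the limit condition \eqref{eq:limH2} at the boundary point $\ell$ of the inverted diffusion (which is $0$ in both cases, an entrance‑not‑exit point). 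One also needs to confirm that the growth hypotheses on $f$ at infinity are precisely the conditions ensuring $\lim_{t\to+\infty}(\overline{X}_t-\overline{f}(t))$ has the sign required by the set‑up of Theorem \ref{theo:up}; this is where the asymptotic comparison between $f(t)/t$ and the drift enters, since after inversion the drift of $\overline{X}$ governs the behaviour near $t=0$ while the large‑$t$ behaviour of $\overline{f}$ is controlled by $f'(0^+)$‑type data. Once these verifications are in place, the identification is immediate from uniqueness.
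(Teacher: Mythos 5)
Your plan has a genuine gap at exactly the step you flag as ``the main obstacle'': you propose to obtain the separated form $\widetilde\Phi(t)q(t,x,f(t))\,dt$ for the law of $G_f$ by verifying the hypotheses of Theorem \ref{theo:up} (or \ref{theo:low}), i.e. that $H(t,y)=\Pb_y(T_{f\circ\theta_t}=+\infty)$ is of class $\C^{1,2}$ and satisfies the boundary limit condition. But Theorem \ref{theo:inteq} assumes only that $f$ is continuous with $f(0)=0$ and a growth condition at infinity, plus uniqueness of the Fredholm solution; there is no way to establish the smoothness of $H$ in that generality, and the whole point of the theorem is to \emph{replace} those regularity hypotheses by the uniqueness assumption. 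If you could verify the hypotheses of Theorem \ref{theo:up}, the conclusion would already follow from that theorem together with Corollary \ref{cor:phi}, and Theorem \ref{theo:inteq} would be essentially redundant. A secondary confusion: Theorems \ref{theo:low} and \ref{theo:up} produce densities of \emph{last passage} times, so applying them ``to the inverted data $(\overline X,\overline f)$'' does not directly yield the density of the first hitting time $T_{\overline f}$.

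The paper's proof runs in the opposite direction and never touches $H$. It exploits the explicit separation-of-variables structure of the Gaussian (resp. Bessel) kernel to rewrite the Fredholm equation, after the substitution $s=1/t$, as the statement that the measure $\rho(ds)=s^{-2}\Phi(1/s)\,q(1/s,0,f(1/s))\,ds$ satisfies $\int_0^{+\infty}e^{z\overline f(s)-z^2s/2}\rho(ds)=e^{\mu z}$ for all $z<0$ (resp. the analogue with $I_\nu$). Optional stopping of the exponential martingale $\exp(zB_s-\tfrac{z^2}{2}s)$ (resp. of $e^{-\lambda t}I_\nu(X_t\sqrt{2\lambda+x^2})/I_\nu(xX_t)$) at $T_{\overline f}$, with the growth hypothesis on $f$ justifying the passage to the limit, shows that the law of $T_{\overline f}$ under $\Pb_\mu$ (resp. $\Pb_c^{(\nu,x)}$) satisfies the same equation; the assumed uniqueness then identifies that law with $\rho$. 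Cameron--Martin transfers the identification to an arbitrary drift, and Watanabe's time inversion converts $T_{\overline f}$ back into $1/G_f$. So the uniqueness hypothesis is used to identify the law of a first hitting time whose integral characterization is obtained by a martingale computation, not to match two expressions both derived from Theorem \ref{theo:up}. To repair your argument you would need to supply this martingale step (or an equivalent derivation of the integral equation for $T_{\overline f}$) in place of the unverified regularity of $H$.
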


\begin{proof}\ \\
$i)$ The transition density of $(B_t^{(\mu)}, t\geq0)$ reads:
$$q(t,z,y)=\frac{1}{2\sqrt{2\pi t}} \exp\left(-\mu(z+y) -\frac{\mu^2}{2}t - \frac{(z-y)^2}{2t}\right).$$
Observe first that:
$$q(t,z,y)=q(t,0,y)   \exp\left(-\frac{z^2}{2t} + \frac{z y}{t}-\mu z\right),$$
hence, by hypothesis, $\Phi$ is the unique solution of the equation:
$$\forall z< 0,\qquad \int_0^{+\infty}\Phi(t) q(t,0,f(t))\exp\left(-\frac{z^2}{2t} + \frac{zf(t)}{t}\right)dt=e^{\mu z},
$$
or, with the change of variable $s=\frac{1}{t}$:
$$\forall z< 0,\qquad \int_0^{+\infty}\Phi\left(\frac{1}{s}\right) q\left(\frac{1}{s},0,f\left(\frac{1}{s}\right)\right)\exp\left(-\frac{z^2}{2}s + zs f\left(\frac{1}{s}\right)\right)\frac{ds}{s^2}=e^{\mu z}.
$$
Now, let $\displaystyle \overline{f}(s)=sf\left(\frac{1}{s}\right)$ and consider, for $z<0$, the exponential Brownian martingale $\left(\exp\left(z B_s-\frac{z^2}{2} s\right), s\geq0\right)$. Doob' stopping theorem implies, since $\displaystyle \mu > \lim_{s\rightarrow 0}\overline{f}(s)$ by hypothesis:
$$e^{z\mu }=\E_\mu\left[ \exp\left(z B_{T_{\overline{f}}}  -\frac{z^2}{2}T_{\overline{f}}\right)  \right]=\int_0^{+\infty} e^{z \overline{f}(s) -\frac{z^2}{2}s}\, \Pb_{\mu}\left(T_{\overline{f}}\in ds \right).$$
Since $\overline{f}$ is a positive function, $T_{\overline{f}}$ admits a density and by unicity of the solution $\Phi$, we have:
$$ \Pb_{\mu}\left(T_{\overline{f}}\in ds \right) =\frac{1}{s^2} \Phi\left(\frac{1}{s}\right) q\left(\frac{1}{s},0,f\left(\frac{1}{s}\right)\right) ds.$$
Next, the Cameron-Martin formula gives:
$$\Pb_\mu^{(x)}\left(T_{\overline{f}}<s\right)=\E_\mu\left[ \exp\left(x B_{T_{\overline{f}}}  -\frac{x^2}{2}T_{\overline{f}}-\mu x\right)1_{\{T_{\overline{f}}<s\}}  \right],$$
i.e. the density of the r.v. $T_{\overline{f}}$ equals:
 $$\Pb_{\mu}^{(x)}\left(T_{\overline{f}}\in ds \right) = \exp\left(x \overline{f}(s)  -\frac{x^2}{2}s-\mu x\right) \frac{1}{s^2} \Phi\left(\frac{1}{s}\right) q\left(\frac{1}{s},0,f\left(\frac{1}{s}\right)\right) ds=\frac{1}{s^2} \Phi\left(\frac{1}{s}\right) q\left(\frac{1}{s},x,f\left(\frac{1}{s}\right)\right) ds.$$
The result then follows from the time inversion property.\\

\noindent
$ii)$ We shall proceed similarly for Bessel processes with drift. The transition density of $(R_t, t\geq0)$ reads:
$$q^{(\nu,c)}(t,z,y)=\frac{1}{2t(\Gamma(\nu+1))^2}\left(\frac{c}{2}\right)^{2\nu} \frac{\exp\left(-\frac{c^2t}{2}\right)}{I_\nu(cz)I_\nu(cy)}\exp\left(-\frac{z^2+y^2}{2t}\right) I_\nu\left(\frac{zy}{t}\right),$$
hence, let $\Phi$ be the unique solution of the equation:
$$\forall z>0,\qquad \int_0^{+\infty}  \frac{\Phi(t)}{2t(\Gamma(\nu+1))^2}\left(\frac{c}{2}\right)^{2\nu} \frac{\exp\left(-\frac{c^2t}{2}-\frac{(f(t))^2}{2t}\right)}{I_\nu(cf(t))}\exp\left(-\frac{z^2}{2t}\right) \frac{I_\nu\left(\frac{zf(t)}{t}\right)}{I_\nu(cz)}dt=1.$$
Consider the local martingale under $\Pb_{c}^{(\nu,x)}$: 
$$\left(M_t=e^{-\lambda t}\frac{I_\nu\left(X_t \sqrt{2\lambda+x^2}\right)}{I_\nu(x X_t)}, t\geq0\right).$$
Applying Doob' stopping theorem to $(M_{T_{\overline{f}}\wedge T_\varepsilon\wedge t}, t\geq0)$ and letting $t\rightarrow +\infty$ and $\varepsilon\rightarrow 0$, we obtain (since $\displaystyle c<\lim_{t\rightarrow 0}\overline{f}(t)$ and $\overline{f}(t)\mathop{=}\limits_{+\infty}o(t)$ by hypothesis):
$$\frac{I_\nu\left(c \sqrt{2\lambda+x^2}\right)}{I_\nu(x c)}=\E_c^{(\nu,x)}\left[ M_{T_{\overline{f}}}\right]=\int_0^{+\infty} e^{-\lambda t}  \frac{I_\nu\left(\overline{f}(t) \sqrt{2\lambda+x^2}\right)}{I_\nu(x  \overline{f}(t) )}\Pb_c^{(\nu,x)}(T_{\overline{f}}\in dt). $$
Now, from Watanabe \cite{Wat}:
$$\Pb_c^{(\nu,x)}\left(\lim_{t\rightarrow+\infty} \frac{X_t}{t}=x  \right)=1,$$
hence, since  $\overline{f}(t)\mathop{=}\limits_{+\infty}o(t)$, we deduce that, for $x>0$, the r.v. $T_{\overline{f}}$ admits a density (see also  \cite{Leh}) and, setting  $2\lambda+x^2=z^2$, we obtain from the unicity of the solution $\Phi$:
$$\frac{I_\nu\left(cx\right)}{I_\nu(x\overline{f}(t))}e^{\frac{x^2}{2}t}\Pb_c^{(\nu,x)}\left(T_{\overline{f}}\in dt\right) =\frac{1}{t^2}  \frac{t \Phi\left(\frac{1}{t}\right)}{2(\Gamma(\nu+1))^2}\left(\frac{c}{2}\right)^{2\nu} \frac{\exp\left(-\frac{c^2}{2t}-\frac{t}{2}\left(f\left(\frac{1}{t}\right)\right)^2\right)}{I_\nu\left(cf\left(\frac{1}{t}\right)\right)}dt.$$
The result then follows once again from a time inversion argument. \\
Observe that, if $\Pb_c^{(\nu,0)}(T_{\overline{f}}<+\infty)=1$, (for instance if $f$ is of $\C^1$ class in the neighborhood of 0), then the result also holds for $x=0$.\\
\end{proof}

\noindent
It might be noticed that for these two processes, the proof above shows anew the phenomenon of separation of variables which appears in the law of $G_f$.
We shall now give an example of each situation.

\subsection{Brownian motion with drift and $f(t)=a+b\sqrt{t}$}
Let $(B_t^{(\mu)}, t\geq0)$ be a Brownian motion with drift $\mu>0$ and choose $f(t)=a+b\sqrt{t}$ with $a,b\in\R$.
\begin{proposition}
The density of the last passage time $G_{a+b\sqrt{\centerdot}}$ is given, for every $x\in \R$, by:
$$\Pb_x^{(\mu)}\left(G_{a+b\sqrt{\centerdot}}\in dt\right)=\varphi(t) \exp\left(-\frac{(x-a)^2}{2t} +\frac{(x-a)b}{2\sqrt{t}}-\mu(x-a)-\frac{b^2}{4} \right) dt$$
where the function $\varphi$ has Mellin's transform:
$$\int_0^{+\infty} t^{\lambda-1}\varphi(t)dt= \frac{ 1}{\mu^{2\lambda-1} D_{-2\lambda+1}\left(b\right)}.$$
\noindent
In particular, if $x=a$, we deduce that:
$$\E_a\left[ G^{\lambda-1}_{a+b\sqrt{\centerdot}}\right]=\frac{ \exp\left(-\frac{b^2}{4}\right)}{\mu^{2\lambda-1} D_{-2\lambda+1}\left(b\right)}.$$
\end{proposition}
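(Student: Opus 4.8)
This is an application of Theorem~\ref{theo:inteq}~$i)$, so the plan is to verify its hypotheses and then to make the first-passage time it involves explicit through an Ornstein--Uhlenbeck transformation. First I would normalise the boundary: since $(B^{(\mu)}_t-a,\,t\geq0)$ is again a Brownian motion with drift $\mu$, now started from $x-a$, and $G_{a+b\sqrt{\centerdot}}$ for $B^{(\mu)}$ coincides with $G_{b\sqrt{\centerdot}}$ for $B^{(\mu)}-a$, we may take the boundary to be $b\sqrt{\centerdot}$, which vanishes at $0$ and satisfies $\lim_{t\to+\infty}(b\sqrt t)/t=0<\mu$. Theorem~\ref{theo:inteq}~$i)$ then yields $\Pb^{(\mu)}_{x}(G_{a+b\sqrt{\centerdot}}\in dt)=\Phi(t)\,q(t,x-a,b\sqrt t)\,dt$ with $q$ the transition density of the drifted Brownian motion; expanding this $q$ and gathering into one function $\varphi(t)$ all the factors that do not involve $x$ puts the right-hand side in the announced product form, so that it only remains to determine $\varphi$.

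To identify $\varphi$, I would run the time-inversion argument of the proof of Theorem~\ref{theo:inteq}~$i)$ for this explicit boundary. The point is that $t\mapsto b\sqrt t$ is a fixed point of time inversion ($\overline{b\sqrt{\centerdot}}(s)=b\sqrt s$ in the notation $\overline g(s)=sg(1/s)$); hence, inverting time in the translated diffusion and absorbing the resulting linear drift by a Cameron--Martin change of measure, the function $\varphi$ is pinned down by the law of the first passage time $T$ of the curve $b\sqrt{\centerdot}$ by a \emph{driftless} Brownian motion started from $\mu$ (the identity $G_{b\sqrt{\centerdot}}=1/T$ carrying one density onto the other). To compute the law of $T$, set $\xi_u=e^{-u}B_{e^{2u}}$: then $(\xi_u,\,u\in\R)$ is an Ornstein--Uhlenbeck process with $\xi_u\sim\mathcal N(\mu e^{-u},1)$, hence issued from $+\infty$ at $u=-\infty$, and the boundary becomes the constant level $b$, so that $T=e^{2H_b}$ with $H_b=\inf\{u:\xi_u=b\}$. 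With $g(x):=e^{x^2/4}D_\theta(x)$ ($D_\theta$ the parabolic cylinder function, i.e.\ the solution of the associated Sturm--Liouville equation with at most polynomial growth at $+\infty$), the process $(e^{\theta u}g(\xi_u),\,u\in\R)$ is a local martingale; applying Doob's optional-stopping theorem at $H_b$ and letting the initial time tend to $-\infty$, using $D_\theta(x)\sim x^\theta e^{-x^2/4}$ and $\E[g(\mathcal N(M,1))]\sim M^\theta$ as $M\to+\infty$, produces a closed form for $\E[e^{\theta H_b}]$, hence the Mellin transform of $T$, and then---after tracking the Cameron--Martin and inversion factors---the stated identity $\int_0^{+\infty}t^{\lambda-1}\varphi(t)\,dt=(\mu^{2\lambda-1}D_{-2\lambda+1}(b))^{-1}$.

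The case $x=a$ then drops out at once: the exponential prefactor reduces to the constant $e^{-b^2/4}$, so $\Pb^{(\mu)}_{a}(G_{a+b\sqrt{\centerdot}}\in dt)=e^{-b^2/4}\varphi(t)\,dt$ and the moment formula follows by Mellin inversion.

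The two places where I expect to have to work are the following. The main obstacle is the passage to the limit as the starting time goes to $-\infty$ in the optional-stopping identity: one must justify uniform integrability of the stopped martingale and evaluate $\E[e^{\theta u_0}g(\xi_{u_0})]$ sharply, which is exactly where the $+\infty$ behaviour of $D_\theta$ intervenes. Secondly, Theorem~\ref{theo:inteq} assumes the relevant Fredholm equation has a unique solution; for $f=a+b\sqrt{\centerdot}$ this has to be checked, but after the change of variable $s=1/t$ the equation turns into a statement about integrals of $\Phi$ against a one-parameter family of Gaussian (Weierstrass-type) densities indexed by $x<0$, and the totality of that family gives uniqueness while the Doob computation above simultaneously exhibits the solution.
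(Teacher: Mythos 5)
Your proposal is correct and arrives at the stated identity, but the key computation is carried out by a genuinely different route from the paper's. The reduction to the boundary $b\sqrt{\centerdot}$ by translation, the appeal to Theorem \ref{theo:inteq}~$i)$, and the observation that $b\sqrt{\centerdot}$ is a fixed point of the inversion $\overline{g}(s)=sg(1/s)$ all match the paper's logic. Where you diverge is in identifying $\varphi$: you actually compute the law of $T_{b\sqrt{\centerdot}}$ for a driftless Brownian motion started from $\mu$ via the classical Ornstein--Uhlenbeck reduction $\xi_u=e^{-u}B_{e^{2u}}$ and the parabolic cylinder martingale $e^{\theta u}e^{\xi_u^2/4}D_\theta(\xi_u)$, with optional stopping from initial time $u_0\to-\infty$; I checked that the limit $e^{\theta u_0}\E[g(\xi_{u_0})]\to\mu^\theta$ yields a Mellin transform consistent with the one obtained below by direct integration, so this route does close. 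The paper never computes this hitting law: it integrates the Fredholm equation against $z^{\nu-1}\,dz$ and invokes the tabulated integral $\int_0^{+\infty}z^{\nu-1}e^{-\beta z^2/2-\gamma z}\,dz=\Gamma(\nu)\beta^{-\nu/2}e^{\gamma^2/(4\beta)}D_{-\nu}(\gamma/\sqrt{\beta})$, which in a single stroke produces the Mellin transform of $\Phi(t)q(t,0,b\sqrt{t})$ \emph{and} establishes the uniqueness hypothesis of Theorem \ref{theo:inteq} via injectivity of the Mellin transform. Your approach is more probabilistic and explains where the parabolic cylinder function comes from; the paper's is shorter and, crucially, disposes of uniqueness automatically. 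That uniqueness point is the one soft spot in your write-up: asserting that the totality of the kernels $\exp(-z^2s/2+zb\sqrt{s})$, $z<0$, determines $\Phi$ is not yet a proof, since this is not a standard complete family. The $z^{\nu-1}\,dz$ integration is exactly the device that converts it into the Mellin kernel $t^{\lambda-1}$ and settles the matter; you should either adopt it or supply an equivalent completeness argument before your appeal to Theorem \ref{theo:inteq} is licensed.
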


\begin{proof}
We need to prove that the equation:
\begin{equation}\label{eq:PhiDnu}
\forall z>0,\qquad \int_0^{+\infty}   \Phi(t)q(t,0,b\sqrt{t})  \exp\left(-\frac{z^2}{2t} - \frac{zb}{\sqrt{t}}\right)dt=\exp\left(-\mu z  +2\mu a\right),
\end{equation}
admits a unique solution $\Phi$, in order to apply Theorem \ref{theo:inteq}. Let us recall the following formula (\cite[3.462-1]{GR}):
$$\int_0^{+\infty} z^{\nu-1} \exp\left(-\frac{\beta}{2} z^2-\gamma z\right)dz = \frac{\Gamma(\nu)}{(\beta)^{\nu/2}}  \exp\left(\frac{\gamma^2}{4\beta}\right) D_{-\nu}\left(\frac{\gamma}{\sqrt{\beta}}\right)\qquad \text{with }\beta,\nu>0, \text{ and } \gamma\in \R,$$
where $D_{-\nu}$ denotes the parabolic cylinder function of index $-\nu$.
Integrating Equation (\ref{eq:PhiDnu}) with respect to $z^{\nu-1} dz$ and applying Fubini-Tonelli, we obtain:
$$\int_0^{+\infty} \Phi(t) q(t,0,b\sqrt{t}) \Gamma(\nu) t^{\nu/2}  \exp\left(\frac{b^2}{4}\right) D_{-\nu}\left(b\right)dt= \frac{\Gamma(\nu)}{\mu^\nu} \exp\left(2\mu a\right)  $$
which, by setting $\lambda-1=\frac{\nu}{2}$, gives the Mellin's transform:
$$\int_0^{+\infty} \Phi(t) q(t,0,b\sqrt{t})t^{\lambda-1} dt = \frac{ \exp\left(2\mu a-\frac{b^2}{4}\right)}{\mu^{2\lambda-1} D_{-2\lambda+1}\left(b\right)}.$$
Since Mellin's transform is injective, the result follows from Theorem \ref{theo:inteq}.\\
\end{proof}

\subsection{Bessel process with drift and $f(t)=\sqrt{at^2+bt}$}\label{sub42}
Let $(R_t, t\geq0)$ be a Bessel process with index $\nu>-1$ and drift $c\geq0$ started from $x$, and choose $f(t)=\sqrt{at^2+bt}$ with $b>0$ and $\sqrt{a}>c$.
\begin{proposition}
The density of the last passage time $G_{\sqrt{a \centerdot^2+b\centerdot}}$ is given, for every $x\geq0$, by:
$$\Pb_x^{(\nu,c)}\left(G_{\sqrt{a \centerdot^2+b\centerdot}}\in dt\right)=\varphi\left(\ln\left(1+\frac{b}{at}\right)\right) \frac{b}{ct}\frac{1}{\sqrt{at^2+bt}} \frac{ \exp\left(-\frac{x^2}{2t}\right)  }{I_\nu(cx)} I_\nu\left(\frac{x\sqrt{at^2+bt}}{t}\right)dt$$
where the function $\varphi$ has Laplace transform:
\begin{equation}\label{eq:Lphi}
\int_0^{+\infty} e^{-\lambda t}\varphi(t)dt= \frac{\exp\left(\frac{c^2b}{4a}\right)M_{-\lambda,\nu/2}\left(\frac{c^2b}{2a}\right)}{\exp\left(\frac{b}{4}\right)M_{-\lambda,\nu/2}\left(\frac{b}{2}\right)}.
\end{equation}
\end{proposition}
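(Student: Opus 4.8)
The plan is to apply Theorem~\ref{theo:inteq}~$ii)$. First one checks its hypotheses: here $f(0)=\sqrt{0}=0$ and $\lim_{t\to+\infty}f(t)/t=\lim_{t\to+\infty}\sqrt{a+b/t}=\sqrt a>c$, so it only remains to prove that the Fredholm equation $\int_0^{+\infty}\Phi(t)\,q^{(\nu,c)}(t,x,f(t))\,dt=1$ (for all $x>0$) has a unique solution, and that this solution is the $\Phi$ implicit in the claimed density. Following the proof of Theorem~\ref{theo:inteq}~$ii)$, after the substitution $t\mapsto 1/t$ and the change of parameter $z^2=2\lambda+x^2$ this equation is equivalent, through the local martingale $\bigl(e^{-\lambda t}I_\nu(R_t\sqrt{2\lambda+x^2})/I_\nu(xR_t),t\ge0\bigr)$ under $\Pb_c^{(\nu,x)}$, to the knowledge of the law of $T_{\overline f}$ under $\Pb_c^{(\nu,x)}$, where $\overline f(t)=t f(1/t)=\sqrt{a+bt}$ and, under $\Pb_c^{(\nu,x)}$, $(R_t,t\ge0)$ is a Bessel process of index $\nu$ with drift $x$ started from $c<\sqrt a$. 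Since $\overline f$ is positive and $\overline f(t)=o(t)$ at $+\infty$, $T_{\overline f}$ admits a density; the above identity then both gives the uniqueness of $\Phi$ and expresses it explicitly in terms of $\Pb_c^{(\nu,x)}(T_{\overline f}\in dt)$. Everything thus reduces to computing this first passage law.

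To compute $\Pb_c^{(\nu,x)}(T_{\overline f}\in dt)$ I would first remove the drift. A direct verification shows that $y\longmapsto y^{-\nu}I_\nu(xy)$ is $\tfrac{x^2}{2}$-harmonic for the drift-free Bessel generator $\G^{(\nu)}$, so that $\bigl(e^{-x^2t/2}R_t^{-\nu}I_\nu(xR_t)/(c^{-\nu}I_\nu(xc)),t\ge0\bigr)$ is the density of $\Pb_c^{(\nu,x)}$ with respect to $\Pb_c^{(\nu)}$ on $\F_t$; stopping at $T_{\overline f}$, where $R=\overline f(T_{\overline f})$, yields
$$\Pb_c^{(\nu,x)}\bigl(T_{\overline f}\in dt\bigr)=e^{-x^2t/2}\,\frac{(a+bt)^{-\nu/2}\,I_\nu(x\sqrt{a+bt})}{c^{-\nu}I_\nu(xc)}\;\Pb_c^{(\nu)}\bigl(T_{\overline f}\in dt\bigr),$$
which reduces the problem to the first passage of a drift-free Bessel process of index $\nu$, started from $c$, to the square-root boundary $\sqrt{a+bt}$. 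For this last step I would use a time change as in Example~\ref{exa:ROU}: with $\gamma=\tfrac{b}{2a}>0$ and $\tau(u)=\tfrac{1}{2\gamma}(e^{2\gamma u}-1)=\tfrac ab(e^{(b/a)u}-1)$, the process $X_u=e^{-\gamma u}R_{\tau(u)}$ is a radial Ornstein--Uhlenbeck process with parameters $\nu$ and $\gamma$ started from $c$; since $a+b\tau(u)=ae^{2\gamma u}$, the moving boundary $\sqrt{a+bt}$ becomes for $X$ the constant level $\sqrt a$, whence $T_{\overline f}=\tau(S)$ with $S:=\inf\{u\ge0:X_u=\sqrt a\}$.

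It then remains to use the classical expression for the Laplace transform of the first passage of the radial Ornstein--Uhlenbeck process to a level above its starting point, namely $\E_c[e^{-\mu S}]=\psi_\mu(c)/\psi_\mu(\sqrt a)$, where $\psi_\mu$ is the increasing positive solution of $\G\psi=\mu\psi$ for $\G=\tfrac12\partial_{xx}+\bigl(\tfrac{2\nu+1}{2x}-\gamma x\bigr)\partial_x$. The substitution $z=\gamma x^2$ turns this eigenvalue equation into Kummer's equation $z\psi''+(\nu+1-z)\psi'-\tfrac{\mu}{2\gamma}\psi=0$, so that $\psi_\mu(x)\propto{}_1F_1\!\bigl(\tfrac{\mu}{2\gamma};\nu+1;\gamma x^2\bigr)\propto x^{-(\nu+1)}e^{\gamma x^2/2}M_{\frac{\nu+1}{2}-\frac{\mu}{2\gamma},\,\nu/2}(\gamma x^2)$. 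Undoing, in order, the time change $\tau$ (which brings in $u=\tfrac ab\ln(1+\tfrac{bt}{a})$, hence, after the final inversion $t\mapsto1/t$ of Theorem~\ref{theo:inteq}, the argument $\ln(1+\tfrac{b}{at})$ of $\varphi$), then the Radon--Nikodym drift factor, the Jacobians of all these changes of variables, and finally the inversion relation linking $\Phi$ to $\Pb_c^{(\nu,x)}(T_{\overline f}\in\cdot)$, one finds that all the $x$-dependence and all the powers of $c/\sqrt a$ cancel, while the bookkeeping forces $\mu=2\gamma\bigl(\lambda+\tfrac{\nu+1}{2}\bigr)$, which collapses the Whittaker index to $-\lambda$; together with the identities $\gamma c^2/2=\tfrac{c^2b}{4a}$, $\gamma c^2=\tfrac{c^2b}{2a}$, $\gamma a/2=\tfrac b4$, $\gamma a=\tfrac b2$, this reproduces exactly formula~(\ref{eq:Lphi}) and the stated density.

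The main obstacle is precisely the bookkeeping of the last paragraph: the Jacobian and Radon--Nikodym factors must be carried faithfully through three successive changes — the time inversion of Theorem~\ref{theo:inteq}, the $h$-transform removing the drift, and the deterministic time change $\tau$ — and one must check that the seemingly $x$- and $(c/\sqrt a)$-dependent factors genuinely cancel. Two smaller points remain: one should justify that $T_{\overline f}$ (equivalently $S$) has a density and that the relevant optional-stopping and dominated-convergence steps apply ($T_{\overline f}<+\infty$ a.s.\ because $R_t/t\to x>0$ under $\Pb_c^{(\nu,x)}$; and for the starting point $x=0$ of the statement one has $\Pb_c^{(\nu,0)}(T_{\overline f}<+\infty)=1$ by positive recurrence of the radial Ornstein--Uhlenbeck process, which via the remark following Theorem~\ref{theo:inteq} settles that case); and one should verify that the formula $\E_c[e^{-\mu S}]=\psi_\mu(c)/\psi_\mu(\sqrt a)$ with $\psi_\mu={}_1F_1(\tfrac{\mu}{2\gamma};\nu+1;\gamma x^2)$ is valid over the whole range $\nu>-1$, in particular for $-1<\nu<0$ where $0$ is reflecting, the natural-scale Neumann condition $d\psi_\mu/ds\,(0^+)=0$ being the correct one there.
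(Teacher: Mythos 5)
Your argument has a genuine gap at the uniqueness step, which is exactly the point the paper's proof is built to handle. You assert that the martingale identity
$\frac{I_\nu(c\sqrt{2\lambda+x^2})}{I_\nu(xc)}=\int_0^{+\infty}e^{-\lambda t}\,\frac{I_\nu(\overline{f}(t)\sqrt{2\lambda+x^2})}{I_\nu(x\overline{f}(t))}\,\Pb_c^{(\nu,x)}(T_{\overline{f}}\in dt)$
``both gives the uniqueness of $\Phi$ and expresses it explicitly.'' It does neither of the first: it only shows that the law of $T_{\overline{f}}$ furnishes \emph{a} solution of the transformed equation. To conclude that it is \emph{the} solution you need injectivity of the transform $\mu\longmapsto\int_0^{+\infty}e^{-\lambda t}\,I_\nu\bigl(\overline{f}(t)\sqrt{2\lambda+x^2}\bigr)/I_\nu\bigl(x\overline{f}(t)\bigr)\,\mu(dt)$, whose kernel is not of Laplace type; this injectivity is precisely the hypothesis of Theorem \ref{theo:inteq} $ii)$ that the Proposition's proof must verify, so as written your appeal to that theorem is circular. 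The paper establishes it by integrating the Fredholm equation (\ref{eq:PhiInu}) against $x^{2\lambda}e^{-\frac{a}{2b}x^2}dx$ and using the closed form of $\int_0^{+\infty}x^{2\lambda}e^{-\alpha x^2}I_{2\nu}(2\beta x)\,dx$ in terms of Whittaker functions: after the substitution $e^u=1+\frac{b}{at}$ the equation becomes an honest Laplace transform of a transform of $\Phi$, which yields uniqueness and the formula (\ref{eq:Lphi}) in one stroke. Your route can be repaired either by reproducing that reduction, or by abandoning Theorem \ref{theo:inteq} altogether and concluding from the Watanabe time-inversion duality of Section \ref{sec4} (under which $G_f$ under $\Pb_x^{(\nu,c)}$ is $1/T_{\overline{f}}$ under $\Pb_c^{(\nu,x)}$), which transfers a first-passage density to a last-passage density with no uniqueness statement needed --- but then you must actually possess the first-passage density, see below.

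Apart from this, your way of producing the answer is genuinely different from the paper's and is, in outline, correct: the paper never computes $\Pb_c^{(\nu,x)}(T_{\overline{f}}\in dt)$, whereas you propose to get it explicitly by removing the drift with the $h$-transform $y\mapsto y^{-\nu}I_\nu(xy)$ (an eigenfunction of $\G^{(\nu)}$ with eigenvalue $x^2/2$, so the stated Radon--Nikodym density is right), then straightening the boundary $\sqrt{a+bt}$ into the level $\sqrt{a}$ for a radial Ornstein--Uhlenbeck process with $\gamma=\frac{b}{2a}>0$ via the time change of Example \ref{exa:ROU}, and finally using the eigenfunction formula for its hitting time; the Kummer/Whittaker reduction you sketch is correct and is essentially what the Remark following the Proposition records when it identifies $\varphi$ with a radial OU hitting-time density. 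What this buys is a probabilistic interpretation of $\varphi$ that the paper's transform computation only reveals a posteriori. What it costs is that the entire verification that the Jacobians, the $h$-transform factor and the powers of $c/\sqrt{a}$ recombine into the stated density and into (\ref{eq:Lphi}) is deferred --- by your own admission this ``bookkeeping'' is the main content of the computation and it is not carried out, so the proposal is a credible plan rather than a proof.
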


\begin{proof}
We need to prove that the equation :
\begin{equation}\label{eq:PhiInu}
\forall x>0,\qquad \int_0^{+\infty} \frac{\Phi(t)}{2t(\Gamma(\nu+1))^2}\left(\frac{c}{2}\right)^{2\nu}  \frac{e^{-\frac{c^2t}{2}}}{I_\nu(cf(t))}\exp\left(-\frac{x^2+(f(t))^2}{2t}\right) I_\nu\left(\frac{x\sqrt{at^2+bt}}{t}\right)dt=I_\nu(cx),
\end{equation}
admits a unique solution $\Phi$ in order to apply Theorem \ref{theo:inteq}. To simplify the notation, set 
$$\displaystyle \Psi(t)=\frac{\Phi(t)}{2t(\Gamma(\nu+1))^2}\left(\frac{c}{2}\right)^{2\nu}\frac{e^{-\frac{c^2t}{2}}}{I_\nu(cf(t))}\exp\left(-\frac{(f(t))^2}{2t}\right).$$
Recall the formula \cite[6.643-2]{GR}, for $\alpha>0, \beta\in \R$ and $\lambda+\nu +\frac{1}{2}>0$:
$$\int_0^{+\infty} x^{2\lambda} e^{-\alpha x^2} I_{2\nu}(2\beta x) dx =\frac{\Gamma(\lambda+\nu+\frac{1}{2})}{2\Gamma(2\nu+1)} \frac{1}{\beta\alpha^\lambda} \exp\left(\frac{\beta^2}{2\alpha}\right) M_{-\lambda,\nu}\left(\frac{\beta^2}{\alpha}\right)$$ 
where $M_{-\lambda,\nu}$ denotes the Whittaker function.
We integrate (\ref{eq:PhiInu}) with respect to $\displaystyle x^{2\lambda} e^{-\frac{a}{2b}x^2} dx$, to obtain:
$$\int_0^{+\infty}  \Psi(t)  \frac{2t}{\sqrt{at^2+bt}}\frac{e^{\frac{b}{4}}}{(\frac{a}{2b}+\frac{1}{2t})^{\lambda}}  M_{-\lambda,\nu/2}\left(\frac{b}{2}\right)dt= \frac{2}{c} \left(\frac{2b}{a}\right)^\lambda \exp\left(\frac{c^2b}{4a}\right)M_{-\lambda,\nu/2}\left(\frac{c^2b}{2a}\right).$$ 
This expression simplifies to:
$$\int_0^{+\infty}  \Psi(t)  \sqrt{\frac{t}{at+b}}\frac{1}{(1+\frac{b}{at})^{\lambda}}dt= \frac{1}{c}  \exp\left(\frac{c^2b}{4a}-\frac{b}{4}\right)\frac{M_{-\lambda,\nu/2}\left(\frac{c^2b}{2a}\right)}{M_{-\lambda,\nu/2}\left(\frac{b}{2}\right)}.$$ 
Finally, we make the change of variable $\displaystyle e^u=1+\frac{b}{at}$:
$$\int_0^{+\infty} e^{-\lambda u} \Psi\left(\frac{b}{a(e^u-1)}  \right) \frac{e^{\frac{u}{2}}}{(e^u-1)^2}du= \frac{a\sqrt{a}}{bc}\exp\left(\frac{c^2b}{4a}-\frac{b}{4}\right)\frac{M_{-\lambda,\nu/2}\left(\frac{c^2b}{2a}\right)}{M_{-\lambda,\nu/2}\left(\frac{b}{2}\right)}$$ 
which gives the Laplace transform of $\Psi$ up to a few transformations. \\
\end{proof}

\begin{remark}
Taking $\lambda=\alpha-\frac{\nu+1}{2}$, we obtain, since $\sqrt{a}>c$:
$$
\int_0^{+\infty} e^{-\alpha t} e^{\frac{\nu+1}{2}t}\varphi(t)dt= \frac{\exp\left(\frac{c^2b}{4a}\right)M_{-\alpha + \frac{\nu+1}{2},\nu/2}\left(\frac{c^2b}{2a}\right)}{\exp\left(\frac{b}{4}\right)M_{-\alpha + \frac{\nu+1}{2},\nu/2}\left(\frac{b}{2}\right)}=\left(\frac{a}{c^2}\right)^{\frac{\nu+1}{2}} \Q^{(\nu,\frac{1}{2})}_{c\sqrt{\frac{b}{a}}}\left[e^{-\alpha T_{\sqrt{b}}}\right]
$$
where $ \Q_{x}^{(\nu, \gamma)}$ denotes the law of a radial Ornstein-Uhlenbeck process with parameters $\nu$ and $\gamma$ started at $x$. Therefore:
$$\varphi(t)dt = \exp\left(-\frac{\nu+1}{2}t\right)\left(\frac{a}{c^2}\right)^{\frac{\nu+1}{2}} \Q^{(\nu,\frac{1}{2})}_{c\sqrt{\frac{b}{a}}} \left(T_{\sqrt{b}} \in dt\right).$$
\end{remark}

\begin{corollary}
Take $f(u)=\sqrt{au^2+bu}$ with $b>0$ and $\sqrt{a}>c$.
 Then  $\overline{f}(t)=\sqrt{a+bt}$ and:
$$\inf\{t\geq0;\, \overline{R}_t=\sqrt{a+bt}\} =  \frac{1}{\sup\{u\geq0;\, R_u=\sqrt{au^2+bu}\} }$$
which leads to the density of the r.v. $T_{\sqrt{a+b\centerdot}}$ under the form:
$$\Pb_{c}^{(\nu,x)}(T_{\sqrt{a+b\centerdot}}\in dt)=\varphi\left(\ln\left(1+\frac{bt}{a}\right)\right) \frac{b}{c}\frac{1}{\sqrt{a+bt}} \frac{ \exp\left(-\frac{x^2}{2}t\right)  }{I_\nu(cx)} I_\nu\left(x\sqrt{a+bt}\right)dt$$
where the function $\varphi$ is given by (\ref{eq:Lphi}).
\end{corollary}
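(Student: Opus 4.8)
The plan is to deduce the corollary from the density of $G_{\sqrt{a\centerdot^2+b\centerdot}}$ obtained in the previous Proposition, via the time--inversion mechanism recalled at the beginning of Section \ref{sec4}. I would start by recording two facts. First, for $t>0$,
$$\overline{f}(t)=tf\left(\frac1t\right)=t\sqrt{\frac{a}{t^2}+\frac{b}{t}}=\sqrt{a+bt},$$
and $\displaystyle\lim_{t\to 0}\overline{f}(t)=\sqrt{a}>c=\overline{R}_0$, so the computation of Section \ref{sec4} applies and yields
$$\inf\{t\geq 0;\ \overline{R}_t=\sqrt{a+bt}\}=\frac{1}{\sup\{u\geq 0;\ R_u=\sqrt{au^2+bu}\}}.$$
Second, by Watanabe \cite[Theorem 2.1]{Wat}, under $\Pb_x^{(\nu,c)}$ the process $(\overline{R}_t=tR_{1/t},\ t\geq 0)$ is a Bessel process with index $\nu$ and drift $x$ started from $c$, that is, it has law $\Pb_c^{(\nu,x)}$.

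Combining the two, the law of $T_{\overline{f}}$ under $\Pb_c^{(\nu,x)}$ is exactly the law of $1/G_f$ under $\Pb_x^{(\nu,c)}$. Since the previous Proposition shows that $G_f$ is, under $\Pb_x^{(\nu,c)}$, absolutely continuous with some density $g$, the random variable $T_{\overline{f}}$ has density $t\mapsto t^{-2}g(1/t)$, as in the general formula of Section \ref{sec4} with $g(s)=\Phi(s)\,q^{(\nu,c)}(s,x,f(s))$. It then remains to substitute $s=1/t$ into the explicit expression for $g(s)$ given by the Proposition. Using $f(1/t)=\frac1t\sqrt{a+bt}$ one finds $\ln\!\bigl(1+\frac{b}{a/t}\bigr)=\ln\!\bigl(1+\frac{bt}{a}\bigr)$, $\exp\!\bigl(-\frac{x^2}{2/t}\bigr)=\exp\!\bigl(-\frac{x^2t}{2}\bigr)$ and $I_\nu\!\bigl(x\,\frac{\sqrt{a/t^2+b/t}}{1/t}\bigr)=I_\nu\!\bigl(x\sqrt{a+bt}\bigr)$, while the powers of $t$ collapse as
$$\frac{1}{t^2}\cdot\frac{b}{c/t}\cdot\frac{1}{\sqrt{a/t^2+b/t}}=\frac{1}{t^2}\cdot\frac{bt}{c}\cdot\frac{t}{\sqrt{a+bt}}=\frac{b}{c\sqrt{a+bt}}.$$
This produces exactly the announced density, with $\varphi$ the function whose Laplace transform is given by (\ref{eq:Lphi}).

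There is no analytic difficulty here; the only thing to watch is the bookkeeping. In particular, time inversion exchanges the starting point and the drift --- the drift-$c$, start-$x$ process $R$ becomes the drift-$x$, start-$c$ process $\overline{R}$ --- and every factor $1/t$ carried by the Bessel transition density $q^{(\nu,c)}$ must be tracked carefully through the substitution $s=1/t$. Once these are handled consistently, the rest is a routine rearrangement.
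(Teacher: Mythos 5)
Your proposal is correct and follows exactly the route the paper intends: the corollary is stated as an immediate consequence of the preceding proposition via the time-inversion mechanism of Section \ref{sec4} (with Watanabe's theorem exchanging the roles of the starting point $x$ and the drift $c$), and your substitution $s=1/t$ into the density of $G_{\sqrt{a\centerdot^2+b\centerdot}}$ reproduces the announced formula, including the collapse of the powers of $t$ to $\frac{b}{c\sqrt{a+bt}}$. The verification that $\lim_{t\to 0}\overline{f}(t)=\sqrt{a}>c=\overline{R}_0$ is the right condition to check for the hitting-time/last-passage-time identity to apply.
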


\bibliographystyle{alpha}
%\bibliography{Biblio}
\nocite{*}

\end{document}